\DeclareFontFamily{U}{tipa}{}
\DeclareFontShape{U}{tipa}{m}{n}{<->tipa10}{}
\newcommand{\arc@char}{{\usefont{U}{tipa}{m}{n}\symbol{62}}}%
\newcommand{\arc}[1]{\mathpalette\arc@arc{#1}}
\newcommand{\arc@arc}[2]{%
  \sbox0{$\m@th#1#2$}%
  \vbox{
    \hbox{\resizebox{\wd0}{\height}{\arc@char}}
    \nointerlineskip
    \box0
  }%
}
\let\pa=\partial
\let\al=\alpha
\let\g=\gamma
\let\d=\delta
\let\lam=\lambda
\let\s=\sigma
\let\f=\frac
\let\p=\psi
\let\G= \Gamma
\let\D=\Delta
\let\T=\Theta
\let\Om=\Omega
\let\e=\varepsilon
\let\pa=\partial
\let\ri=\rightarrow
\let\na=\nabla
\def\di{\mathrm{div}\,}
\def\curl{\mathrm{curl}\,}
\newcommand{\beq}{\begin{equation}}
\newcommand{\eeq}{\end{equation}}
\newcommand{\beqo}{\begin{equation*}}
\newcommand{\eeqo}{\end{equation*}}
\newcommand{\ben}{\begin{eqnarray}}
\newcommand{\een}{\end{eqnarray}}
\newcommand{\beno}{\begin{eqnarray*}}
\newcommand{\eeno}{\end{eqnarray*}}
\newtheorem{theorem}{Theorem}[section]
\newtheorem{lemma}[theorem]{Lemma}
\newtheorem{proposition}[theorem]{Proposition}
\newtheorem{corol}[theorem]{Corollary}
\theoremstyle{remark}
\newtheorem{case}{Case}
\newtheorem{rmk}{Remark}[section]
\newcommand{\BR}{\mathbb{R}}
\newcommand{\tgg}{\tilde{\mathcal{G}}^1}
\newcommand{\dist}{\mathrm{dist}}
\newcommand{\mte}{\mathcal{T}_\e}
\newenvironment{problem}[2][Problem]{\begin{trivlist}
\item[\hskip \labelsep {\bfseries #1}\hskip \labelsep {\bfseries #2.}]}{\end{trivlist}}
\begin{document}

\title[Two Dimensional Liquid Crystal Droplet]{Two Dimensional Liquid Crystal Droplet Problem with Tangential Boundary Condition}

\author{Zhiyuan Geng}
\address{Basque Center for Applied Mathematics, Alameda de Mazarredo 14
48009 Bilbao, Bizkaia, Spain}
\email{zgeng@bcamath.org}

\author{Fanghua Lin}
\address{Courant Institute, New York University, 251 Mercer Street, New York, NY 10012, USA}
\email{linf@cims.nyu.edu}

\begin{abstract}
This paper studies a shape optimization problem which reduces to a nonlocal free boundary problem involving perimeter. It is motivated by a study of liquid crystal droplets with a tangential anchoring boundary condition and a volume constraint. We establish in 2D the existence of an optimal shape that has two cusps on the boundary. We also prove the boundary of the droplet is a chord-arc curve with its normal vector field in the VMO space. In fact, the boundary curves of such droplets belong to the so-called Weil-Petersson class. In addition, the asymptotic behavior of the optimal shape when the volume becomes extremely large or small is also studied.
\end{abstract}

\maketitle

\section{Introduction}
\subsection{Background}

Liquid crystal droplets are of great interest from both the theory and applications. They are important in the studies of topological defects in the bulk or on the surface of liquid crystals; and they are useful in understandings of anisotropic surface energies and variety anchoring conditions. Determining the shape of the droplets and the associated equilibrium configurations of the liquid crystals leads to a shape optimization problem that, in some cases, it becomes a nonlocal free boundary.


In fact, we are particularly interested in the elongated droplets known as \emph{tactoids}, which usually possess a characteristic eye shape.
After a quick examination, one finds the boundary anchoring condition for the molecular orientation to achieve such a desired shape needs to be a tangential anchoring  , i.e. the director is orthogonal to the normal of the droplet boundary.


Mathematically, the most commonly used continuum theory to describe nematic liquid crystals is the Oseen-Frank theory, where the local state of the liquid crystal is described by a $\mathbb{S}^1$- or $\mathbb{S}^2$- valued vector $n$ that represents the mean local orientation of molecule's optical axis. Let $\Om$ be the region occupied by a nematic liquid crystal droplet, the Oseen-Frank bulk energy associated with the director field is the functional
\beq\label{oseenfrank}
E_{OF}(n,\Om)=\int_{\Om}w(n.\na n)\,dx,
\eeq
where
\begin{align}
\label{ofdensity} w(n,\na n)=&k_1(\di n)^2+k_2(n\cdot\curl n)^2+k_3|n\times \curl n|^2\\
\nonumber &+(k_2+k_4)\left( \mathrm{tr}(\na n)^2-(\di n)^2 \right).
\end{align}
We shall consider the one-constant approximation, i.e.,  $k_1=k_2=k_3=1$ and $k_4=0$, the energy functional \eqref{oseenfrank} reduces to
\beq\label{harmonicmap}
E_{OF}(n,\Om)=\int_{\Om}|\na n|^2\,dx,
\eeq
which is the energy functional for harmonic maps.

 Liquid crystal droplets are often either dispersed in an polymeric medium or surrounded by another fluid such as water, there is an interfacial energy which will play an essential role in determining the optimal shapes.
Following \cite{Friedel} and \cite{oseen}, the surface energy may be written as
\beq\label{surface}
E_{s}(\Om,n)=\int_{\pa\Om} f(n\cdot \nu)\,dA
\eeq
where $\nu$ is the outer normal on $\pa\Om$ and for simplicity, $f$ is assumed to have the form (see \cite{ericksen})
\beq\label{form of f}
f(\theta)=\mu(1+\lambda\theta^2),
\eeq
for some $\mu>0$ and $-1<\lambda<\infty$.
Thus the total energy for a liquid crystal droplet configuration is given by:
\beqo
E(\Om,n)=E_{OF}(\Om,n)+E_s(\Om,n)
\eeqo

As both the shape of $\Om$ and the director $n$ are varying, determining the stable configuration leads to the following free boundary problem:
\begin{problem}{A}
Find a pair $(\Om,n)$, that minimizes the functional
\beq\label{energy:droplet}
E(\Om,n)=\int_{\Om}w(n,\na n)\,dx+\int_{\pa\Om}f(n\cdot\nu) dA.
\eeq
subject to the constraint $\mathrm{vol}(\Om)=V_0$.
\end{problem}
Here $\mathrm{vol}$ denotes Lebesgue measure and $V_0$ is a positive constant.

Problem A draws great attention from both physicists and mathematicians. There are many research works on Problem A with physical experiments, numerical simulations and formal analysis, see for example \cite{da,ksl,ps,ps0,lc,rb,vov,kkpyv,msw}.  On the other hand, rigorous theoretical treatment of this problem is more challenging because of the difficulty of determining the shape and the director at the same time. One way to overcome such difficulty is to assume the droplet have a simple geometry, such as a disk, an ellipse or a intersection region of two disks, see e.g. \cite{kbs,vov,williams}. In these works, the shape of the droplet is either fixed, or determined by only one or two parameters (such as the eccentricity of an ellipse). And the minimization often involves finding the best shape parameter and the director field under various boundary conditions and different Oseen-Frank elastic constants. Another way is to presume the configuration of director field (such as a constant vector field), and then find the best shape that minimize the surface energy alone, subject to the fixed volume constraint, see e.g. \cite{rb,virga}. These two methods are useful to partially justify the phenomena observed in experiments but are not satisfactory from a mathematical point of view.

A more rigorous study of Problem A was conducted by the second author and Poon in \cite{LinPoon}. Under the key assumption that all admissible domains are convex, they establish the existence and partial regularity of Problem A (see \cite[Theorem 2.4]{LinPoon}). The convexity assumption on $\Om$ the shape of droplets, on one hand, makes the problem more accessible mathematically; and on the other hand, it does match many experimental observed liquid crystals droplets which are of shapes of ellipsoids (balls) and cigars. In this connection, they also studied the cases when the surface energy favors the normal boundary anchoring condition or the tangential boundary anchoring condition. When $\lam>0$ and $\mu\ri \infty$, we get the following minimization problem:
\begin{problem}{B}
(Problem B in \cite{LinPoon})
Find a pair $(\Om,n)$ that minimizes
\beqo
\int_{\Om}w(n,\na n)\,dx+\mu \mathrm{Area}(\pa\Om)
\eeqo
and such that (i) $\mathrm{vol}(\Om)=V_0$ and (ii) $n\cdot \nu=0$ on $\pa\Om$.
\end{problem}

When $-1<\lam<0$ and $\mu\ri \infty$, one gets
\begin{problem}{C}
(Problem C in \cite{LinPoon})
Find a pair $(\Om,n)$ that minimizes
\beqo
\int_{\Om}w(n,\na n)\,dx+\mu \mathrm{Area}(\pa\Om)
\eeqo
and such that (i) $\mathrm{vol}(\Om)=V_0$ and (ii) $n\cdot \nu=1$ on $\pa\Om$.
\end{problem}

It is proven in \cite{LinPoon} that there are minimizers among convex domains $\Om$ for both Problem B and Problem C. Moreover, the only solution to Problem C (up to a Euclidean motion) is $(B_R,\f{x}{|x|})$, such that $|B_R|=V_0$.

Li $\&$ Wang recently extends the previous result in which they Replace the convexity assumption by a notion of M-uniform domains,
see \cite{qli}.
It is worth to point out that the Problems A, B and C thus presented were all derived from a phenomenological theory, see \cite{oseen} and \cite{ericksen}. In a recent work \cite{LinWang}, it is shown that one can rigorously establish these model problems from a general theories of Ericksen (for liquid crystals with variable degree of orientations \cite{ericksen2}) or from the de Gennes-Landau model of liquid crystals \cite{deGennes} in suitable physical regimes.

From our experience, One likely can establish a general existence and partial regularity theory for Problems A, B and C without the convexity assumption on the shape of $\Om$. However, one also expects such a theory will not be able to tell certain particular shapes and configurations (that are observed in experiments and numerical simulations) are minimizers. In particular, one likely will not be able to deduce that \emph{tactoids}, balls, cigars and apples shaped droplets are minimizers. The latter are in fact commonly observed in experiments and of interest to many researchers.
In this article, we will concentrated on the two dimensional case of Problem B, where the tangential anchoring boundary condition and the fixed volume constraint is presumed. The minimizer is expected to have a spindle shape, which is known as \emph{tactoids}, and a \emph{bipolar} director field. Here the bipolar direct field refers to an axially symmetric configuration with tangential anchoring boundary condition, such that two boojums are located at opposite ends of the axis. If one investigate thin liquid crystals samples in experiments, the region of nematic liquid crystals will form a planar domain (tactoid) whose boundary consists of two curves that meet at two singular points and form angles or cusps. For more experimental evidences and numerical simulations of tactoids with such bipolar director configurations, the readers are referred to \cite{da,ksl,ps,ps0,rb,vov} for more details. These works also manifest the significance of tactoids as an object of study.

There are several works that focus on the rigorous mathematical analysis of tactoids with tangential anchoring of the director on the surface. Shen \textit{et al.} \cite{shen} discussed such bipolar configurations of droplet in the fixed spherical domain case as well as the free boundary case. For the latter, they introduce a relaxed energy to establish the existence of critical points and some stability results. Recently, a model problem based on highly disparate elastic constants is proposed by  Golovaty, Novack, Sternberg and Venkatraman in \cite{gnsv} to understand corners and cusps that form on the nematic-isotropic interface. They prove some $\G$-convergence results (when some elastic constant $\e$ goes to $0$) and study the role played by the boundary tangency requirement and the elastic anisotropy on the formation of interfacial singularities.

In this work, we investigate the planar tactoids by solving Problem B. What distinguishes our work from the previous work of Lin $\&$ Poon is that we drop the convexity assumption on the domain $\Om$. Instead, we only assume a symmetry assumption with respect to $x$-axis for the purpose of convenience. We first prove some geometric properties of the free boundary. The main property is that away from two cusps, the boundary curve is a vanishing chord-arc curve and the boundary normal vector $\nu$ is in $\mathrm{VMO}$. Furthermore, we notice that our curve $\G$ has many similar properties with the so-called Weil-Petersson curve (see Section \ref{weil-p}). As a consequence,  the arc-length parameterization of the curve is in the Sobolev space $H^{3/2}$.  Then using these properties, we demonstrate the existence of global minimizer with two cusps on the boundary, which verifies the shape of tactoids. We also study the asymptotic shape of the nematic drop when the volume tends to be very large or very small. Note that due to a very strong non-local character of this problem, currently we are not able to show that $\nu$ is continuous on the boundary. We hope to prove higher regularity results in the future.

\subsection{Mathematical Formulation}
Now we give the precise formulation of the model problem. Note that what we have in mind is the tactoid that forms two cusps on the boundary. Set $\Om\subset \mathbb{R}^2$ as the simply-connected region which is a domain enclosed by a Jordan curve with finite length.  We denote by $n\in \mathbb{S}^1$ the unit vector that represents the director of liquid crystal. The Oseen-Frank bulk energy is given by \eqref{harmonicmap}. Then the variational problem is
\begin{problem}{D}
(2D case of Problem B) Find a pair $\{\Om, n\}$ that minimizes
\beqo
\int_\Om |\na n|^2dx+ \mathrm{Per}(\pa \Om)
\eeqo
such that $\mathrm{vol}(\Om)=V_0 $ and $n\cdot \nu=0 $ on $\pa\Om$. Here $\mathrm{Per}$ means the perimeter.
\end{problem}
\par
Here we want to point out that this formulation already implies that the boundary of the minimizer $\Om$ cannot be smooth everywhere. We can briefly explain it in this way: if $\pa\Om$ is a closed smooth curve and the boundary tangential vector is continuous, then the topological degree of tangential vector is at least one and therefore there is no finite Dirichlet energy extension of $n|_{\pa\Om}$ inside the 2D domain $\Om$. Now we refine this problem by adding more constraints and then introduce the final version of the problem that we will study.

First we assume $\Om$ is symmetric with respect to $x$-axis. And therefore we only consider half of the domain located in the upper-half plane. Let $\Gamma$ be a rectifiable curve that satisfies following conditions:
\begin{itemize}
\setlength\itemsep{0.5em}
 \item[(i)] $\G=\{(x(t),y(t)): x(t),\,y(t)\in AC([0,l(\Gamma)])\}$, where $l(\Gamma)$ is the length of $\Gamma$.
 \item[(ii)] $ (x(0),y(0))=(0,-a),\, (x(1),y(1))=(0,a) \text{ for some }a>0$.
 \item[(iii)] $\mathcal{H}^{1}(\G\cap \{(x,0):x\in\mathbb{R}\})=0$.
 \item[(iv)] $\dot{x}(t)\geq 0 ,\; y(t)\geq 0,\;(x(t),y(t))\neq (x(s),y(s)) \text{ for }s\neq t$.
 \item[(v)] $\sqrt{|\dot{x}(t)|^2+|\dot{y}(t)|^2}= 1$ almost everywhere.
\end{itemize}
Note that here condition (i) and (v) mean that we parameterize $\G$ by unit length; condition (ii) implies two endpoints of $\G$ belong to $x$-axis; condition (iv) tells that $\G$ does not touch itself and will always "point from left to right". Now we define $\Om_\G$ as the region enclosed by $\Gamma$ and $x$-axis. Note that so far $\Om_\G$ may not be a simply connected region since $\Gamma(t)$ may touch $x$-axis at some other point between two endpoints. However, we will show later in Lemma \ref{mini} that for a minimizer, $\Om_\G$ has to be simply connected.

The boundary condition for director $n$ on $\pa\Om_\G=\{(x,0): x\in[-a,a]\}\cap \Gamma$ is given by
\begin{align*}
& n(x,y)=(1,0)\text{ on } \{(x,0):x\in [-a,a]\},\\
& n(x(t),y(t))=(x'(t),y'(t)) \text{ on }(x(t),y(t))\in \Gamma.
\end{align*}
Note that in 2D, the unit vector can be determined by an angle function $\Theta$ according to $n_1=\cos{\Theta}, n_2=\sin{\Theta}$. We will work with this angle function $\Theta$. Then the corresponding boundary condition for $\Theta$ is
\begin{equation}
\label{bdycondition}
\begin{split}
&\Theta(x,y)=0\quad \text{ on } \{(x,0):x\in [-a,a]\}\\
&\Theta(x(t),y(t))=\arcsin{y'(t)} \quad  \text{ on }(x(t),y(t))\in \Gamma.
\end{split}
\end{equation}
Now we are ready to define the following admissible set for $\G$:
\begin{align*}
\mathcal{G}_v:=&\{\G \text{ satisfies condition (i--v)}, \text{ and }\Theta\big|_{\pa\Om_\G} \text{ has a harmonic extension }\Theta \text{ defined in }\overline{\Om_\G}\\
& \text{ such that }\int_{\Om_\G}|\na \T|^2\,dxdy<\infty \text{ and }|\Om_\G|=v \}
\end{align*}
Here $v$ is a positive constant representing the volume of $\Om_\G$. Figure \ref{figure1} shows our assumptions on $\G,\,\Om_\G$ and the tangential anchoring condition for $\T$.
\begin{figure}[h]
\centering
\begin{tikzpicture}[x=1.5cm,y=1.2cm]
  \clip (-5.7,-0.7) rectangle (5,4.0);
  \draw[black,thick] (-4.5,0) node[below]{$(-a,0)$}
  .. controls +(right:1.5cm) and +(left:2cm) .. node[above=1cm, right=0.2cm] {\Large$|\Omega_{\Gamma}|=v$} (-0.5,0)
    \foreach \p in {0,20,...,100} {
    node[sloped,inner sep=0.5cm,above,pos=\p*0.01,
    anchor=south west,
    minimum height=(10+\p)*0.03cm,minimum width=(10+\p)*0.03cm]
    (N0 \p){}
  }
  .. controls +(right:1.5cm) and +(left:2cm) .. (3.5,0)node[below]{$(a,0)$}
    \foreach \p in {0,20,...,100} {
    node[sloped,inner sep=0.5cm,above,pos=\p*0.01,
    anchor=south west,
    minimum height=(10+\p)*0.03cm,minimum width=(10+\p)*0.03cm]
    (N5 \p){}
    }
  ;
  \draw[black,thick] (-4.5,0)
  .. controls +(right:3cm) and +(left:2cm) .. (-2.2,2.8) node[above right] {$\Gamma=\{(x(t),y(t))\}$}
  \foreach \p in {0,20,...,100} {
    node[sloped,inner sep=0cm,above,pos=\p*0.01,
    anchor=south west,
    minimum height=(10+\p)*0.03cm,minimum width=(10+\p)*0.03cm]
    (N \p){}
  }
  .. controls +(right:2cm) and +(left:1.5cm) .. (-0.1,0.9)
  \foreach \p in {0,20,...,100} {
    node[sloped,inner sep=0cm,above,pos=\p*0.01,
    anchor=south west,
    minimum height=(110-\p)*0.03cm,minimum width=(110-\p)*0.03cm]
    (N2 \p){}
  }
    .. controls +(right:1.2cm) and +(left:1.2cm) .. (1.2,2.0) node[above=0.1cm] {$\Theta=\arctan{y'(t)}$}
  \foreach \p in {0,20,...,100} {
    node[sloped,inner sep=0cm,above,pos=\p*0.01,
    anchor=south west,
    minimum height=(110-\p)*0.03cm,minimum width=(110-\p)*0.03cm]
    (N3 \p){}
  }
      .. controls +(right:2cm) and +(left:2.6cm) .. (3.5,0)
  \foreach \p in {0,20,...,100} {
    node[sloped,inner sep=0cm,above,pos=\p*0.01,
    anchor=south west,
    minimum height=(110-\p)*0.03cm,minimum width=(110-\p)*0.03cm]
    (N4 \p){}
  }
  ;
    \foreach \p in {0,20,...,100} {
  \draw[-latex,color=black] (N0 \p.south west)
    -- ($(N0 \p.south west)!0.8cm!(N0 \p.south east)$);
  }
    \foreach \p in {0,20,...,100} {
  \draw[-latex,color=black] (N5 \p.south west)
    -- ($(N5 \p.south west)!0.8cm!(N5 \p.south east)$);
  }
  \foreach \p in {0,20,...,100} {
  \draw[-latex,color=black] (N \p.south west)
    -- ($(N \p.south west)!0.8cm!(N \p.south east)$);
  }
  \foreach \p in {0,20,...,100} {
  \draw[-latex,color=black] (N2 \p.south west)
    -- ($(N2 \p.south west)!0.8cm!(N2 \p.south east)$);
  }
    \foreach \p in {0,20,...,100} {
  \draw[-latex,color=black] (N3 \p.south west)
    -- ($(N3 \p.south west)!0.8cm!(N3 \p.south east)$);
  }
    \foreach \p in {0,20,...,100} {
  \draw[-latex,color=black] (N4 \p.south west)
    -- ($(N4 \p.south west)!0.8cm!(N4 \p.south east)$);
  }
\end{tikzpicture}
\caption{Curve $\G$, domain $\Om_\G$ and $\T$}
\label{figure1}
\end{figure}

To this end, we consider the following variational problem
\begin{problem}{P}
Find $\G\in \mathcal{G}_v$ that minimizes the following functional
\beq\label{energy}
E(\G)=\int_{\Om_\G}|\na\T|^2\,dxdy+l(\G),
\eeq
where $\Theta$ is determined by $\G$ in the following way
\begin{equation*}
\begin{cases}
\Delta\Theta=0,& \text{in }\Om_\G,\\
\Theta\big|_{\pa\Om_\G}& \text{ is defined as in \eqref{bdycondition}}.
\end{cases}
\end{equation*}
\end{problem}

We will study the existence and properties of global energy minimizers of Problem P in the rest of the article. In Section 2 we prove various geometric properties of $\G$ and $\Om_\G$. When the energy $E(\G)$ is finite (not necessarily a minimizer), we show that $\G$ is a vanishing chord-arc curve and $\nu\in \mathrm{VMO}$ on $\G$. Moreover, the arc-length parameterization $(x(t),y(t))$ belongs to $H^{3/2}(0,l)$. As a consequence, the function $\T$ defined on $\bar{\Om}_\G$ can be extended to a $H^1$ function on $\mathbb{R}^2$ according to the classical theory on the relationship of quasidisks and Sobolev extention domains. The existence of a global minimizer for Problem P is established in Section 3. The proof relies heavily on the properties proved in Section 2. We also show that $\G$ and $x$-axis will form two cusps near two intersection points. Under the assumption that $\G$ can be written as the graph of a $C^1$ function $f$, the Euler-Lagrange equation for $\G$ is also derived. Finally in Section 4 we study asymptotic profiles of $\G$ when the volume $v$ tends to be very large or small. We would like to point out that this article just represents an initial investigation of Problem P, and there are many open problems to be studied in the future.

\textbf{Acknowledgement}
The research of the authors are partially supported by an NSF grant DMS1955249.

\section{Geometric properties of $\G$ and $\Om_\G$}
\subsection{Sobolev extension domain}
We assume $v=1$ throughout this section. And if there exists a energy minimizer for Problem P, we denote it by $\G_m$. We further write the corresponding $\Om_{\G_m}$ and $\Theta$ function as $\Om_m$ and $\T_m$.  We start with the observation that

\vspace{2mm}
\textbf{Claim}: $\mathcal{G}_1$ is not empty. There is at least one smooth curve $\G\in\mathcal{G}_1$.
\vspace{2mm}

Actually we can find a smooth curve $\Gamma_0\in \mathcal{G}_1$ by directly constructing a curve $\G_0$. Let $\G_0$ be the graph of function $f_0(x)=\f{\cos{x}+1}{2\pi}, \,x\in[-\pi,\pi]$. By definition $\Omega_{\G_0}=\{(x,y): \,-\pi\leq x\leq \pi,\, 0\leq y\leq f_0(x)\}$ and we set $\Theta_0(x,y)=-\f{2\pi y}{\cos{x}+1}\arcsin{\f{\sin{x}}{2\pi}}$ for $(x,y)\in\Om_{\G_0}$. It is straightforward to check that $\Gamma_0$ satisfies the condition (i--v), $|\Om_{\G_0}|=1$, and $\Theta_0$ satisfies the boundary condition \eqref{bdycondition}. Then we compute the energy directly
\begin{align*}
E(\G_0)&=\int_{-\pi}^\pi \sqrt{1+\left(\f{d f_0}{dx}\right)^2}\,dx+\int_{-\pi}^\pi\int_0^{\f{\cos{x}+1}{2\pi}}|\pa_y\T_0|^2+|\pa_x \T_0|^2\,dydx\\
&=\int_{-\pi}^\pi\left\{ \sqrt{1+\f{\sin^2 x}{4\pi^2}}+\f{2\pi\left| \arcsin{(\f{\sin{x}}{2\pi})} \right|^2}{\cos{x}+1}+\f{\left|\f{\cos{x}(\cos{x}+1)}{\sqrt{4\pi^2-\sin^2{x}}}+\arcsin{(\f{\sin{x}}{2\pi})}\cdot \sin{x}\right|^2}{6\pi(\cos{x}+1)} \right\}\,dx\\
&\approx 12.65
\end{align*}
Therefore we have verified that $\G_0\in\mathcal{G}_1$. And if Problem P admits a global minimizer $\G_m$, then we get the following upper bound for the energy infimum:
\beqo
M:=E(\G_0)\geq E(\G_m)
\eeqo

The next lemma tells us that the minimizing curve $\G_m$, if exists, will not touch $x$-axis besides two endpoints, which implies $\Om_m$ is simply connected.

\begin{lemma}\label{mini}
If $\G_m$ is the global minimizer of $E(\G)$ among all $\G\in\mathcal{G}_1$ and it is parametrized by arc length as in condition (i--v), then for any $t\in(0,l(\G_m))$, we have $y(t)>0$.
\end{lemma}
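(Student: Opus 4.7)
The plan is a contradiction argument based on the conformal invariance of the $2$D Dirichlet integral. Suppose there exists $t_0\in(0,l(\G_m))$ with $y(t_0)=0$. Condition (iii) together with the unit-speed/monotonicity conditions (iv)--(v) shows that the zero set $Z:=\{t\in[0,l(\G_m)]:y(t)=0\}$ cannot contain any nondegenerate interval (otherwise $\dot x=\pm 1$ on it would create positive $\mathcal H^1$-overlap of $\G_m$ with the $x$-axis). The open complement of $Z$ in $(0,l(\G_m))$ is therefore an at most countable disjoint union of maximal ``excursion intervals'' $I_j=(a_j,b_j)$ with $y>0$ on each $I_j$. On each $\bar I_j$, the monotonicity $\dot x\ge 0$ and injectivity force $x(a_j)<x(b_j)$, so the arc $\G_m|_{\bar I_j}$, together with the horizontal segment from $(x(a_j),0)$ to $(x(b_j),0)$, bounds a simply connected region $\Om_j$ of positive area $v_j$. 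The $\Om_j$ are pairwise disjoint and partition $\Om_m$ up to a null set, so that $\sum_j v_j=1$ and $\sum_j L_j=l(\G_m)$, where $L_j:=l(\G_m|_{\bar I_j})>0$. The standing assumption produces at least two such indices, and in particular $v_j\in(0,1)$ for every $j$.

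For each $j$, I would translate $\G_m|_{\bar I_j}$ horizontally so that its endpoints are symmetric about the origin and then dilate by $\lam_j:=v_j^{-1/2}>1$ about the origin, producing a new curve $\tilde\G_j$ whose enclosed region $\tilde\Om_j$ satisfies $|\tilde\Om_j|=v_j\lam_j^2=1$. Conditions (i)--(v) are invariant under horizontal translation and positive dilation (after a trivial unit-speed reparametrization), and pulling back $\T_m$ by the same affine map yields a harmonic $\tilde\T_j$ on $\tilde\Om_j$ whose trace is $0$ on the $x$-axis component of the boundary and which matches the tangential condition \eqref{bdycondition} on $\tilde\G_j$, since unit tangent vectors are invariant under translation and positive scaling. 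Hence $\tilde\G_j\in\mathcal{G}_1$, and a change of variables gives the key identity
\begin{equation*}
E(\tilde\G_j)\;=\;\int_{\Om_j}|\na\T_m|^2\,dxdy\;+\;\lam_j L_j\;=\;D_j\;+\;v_j^{-1/2}L_j,
\end{equation*}
where $D_j:=\int_{\Om_j}|\na\T_m|^2\,dxdy$: the Dirichlet piece is unchanged because $\int|\na u|^2\,dx$ is conformally invariant in dimension two, while the length scales by $\lam_j$.

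Forming the convex combination with weights $v_j$ and using $v_j<1$ (giving $v_jD_j\le D_j$) together with the strict inequality $\sqrt{v_j}\,L_j<L_j$ that follows from $v_j<1$ and $L_j>0$, I obtain
\begin{equation*}
\sum_{j}v_j\,E(\tilde\G_j)\;=\;\sum_{j}v_jD_j\;+\;\sum_{j}\sqrt{v_j}\,L_j\;<\;\sum_jD_j\;+\;\sum_jL_j\;=\;E(\G_m).
\end{equation*}
Since $\sum_j v_j=1$, there must exist $j^\ast$ with $E(\tilde\G_{j^\ast})<E(\G_m)$, contradicting the minimality of $\G_m$. The main obstacle in executing this plan rigorously is the admissibility check for each $\tilde\G_j$---in particular confirming that the pulled-back angle has finite Dirichlet energy and satisfies \eqref{bdycondition} in the sense demanded by the definition of $\mathcal G_1$---and, when $Z$ is complicated, organizing the countably many excursions simultaneously; but once the conformal invariance of the $2$D Dirichlet integral is accepted, the remaining estimates are purely arithmetic.
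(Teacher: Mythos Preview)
Your argument is correct and rests on the same engine as the paper's proof: rescale a sub-piece of $\G_m$ by the factor $v_j^{-1/2}$ so that it has area $1$, use the conformal invariance of the two-dimensional Dirichlet integral to keep that term unchanged, and exploit that the length strictly increases under dilation. The difference lies in the decomposition and in the arithmetic at the end. The paper simply cuts $\G_m$ at a single interior zero $t_0$, obtaining exactly two pieces $\G_1,\G_2$ with areas $\alpha,1-\alpha$; it then applies minimality of $\G_m$ to each rescaled piece separately, producing the pair of inequalities $(\alpha^{-1/2}-1)\,l(\G_1)\ge l(\G_2)$ and $((1-\alpha)^{-1/2}-1)\,l(\G_2)\ge l(\G_1)$, whose combination forces $\sqrt{\alpha}+\sqrt{1-\alpha}\le 1$, an impossibility for $\alpha\in(0,1)$. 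You instead decompose into all maximal excursions $I_j$ and take the convex combination $\sum_j v_j E(\tilde\G_j)$, which is strictly below $E(\G_m)$ because $\sqrt{v_j}<1$; the pigeonhole step then yields a strictly better competitor. Your route handles a possibly countable zero set in one stroke and the averaging trick is cleaner than the paper's chained inequalities; the paper's route is more elementary in that it never needs more than two pieces or any consideration of a complicated $Z$. One small point to tighten: to get $\sum_j L_j=l(\G_m)$ you need not just that $Z$ contains no interval but that $m(Z)=0$; this follows from the same reasoning (on $Z$ one has $y'=0$ a.e., hence $|x'|=1$ a.e., and condition (iii) then forces $m(Z)=0$ via the area formula for the monotone AC function $x$).
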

\begin{proof}
We prove by contradiction. Assume $y(t_0)=0$ for some $t_0\in (0,l(G(m)))$, then the point $(x(t_0),y(t_0))$ cuts $\G_m$ into two parts, which are denoted by $\G_1$ and $\G_2$ respectively. We call the domain enclosed by $\G_i$ and $x-$axis as $\Om_i$ for $i=1,2$. Let $\al:=|\Om_1|$. We can further assume $\al\in (0,1)$ because if $\al=0$ or $1$, then one of $\G_i$ will coincide with $x-$axis which contradicts with the fact that $\G_m$ is a minimizer. Now we set
\begin{align*}
&\tilde{\G}_1=\f{1}{\sqrt{\al}}G_1,\quad \tilde{\Om}_1=\f{1}{\sqrt{\al}}\Om_1,\quad \tilde{\T}_1(\f{x}{\sqrt{\al}}, \f{y}{\sqrt{\al}})=\T_m(x,y)\text{ for }(x,y)\in\Om_1\\
&\tilde{\G}_2=\f{1}{\sqrt{1-\al}}G_2,\quad \tilde{\Om}_2=\f{1}{\sqrt{1-\al}}\Om_2,\quad \tilde{\T}_2(\f{x}{\sqrt{1-\al}}, \f{y}{\sqrt{1-\al}})=\T_m(x,y)\text{ for }(x,y)\in\Om_2
\end{align*}
Now we can easily check that for $i=1,2$,  $(\tilde{\G}_i,\tilde{\Om}_i,\tilde{\T}_i)$ are energy competitors (after some horizontal translations) for $(\G_m,\Om_m,\T_m)$. By basic scaling property we get
\begin{align*}
l(\tilde{\G}_1)&=\f{1}{\sqrt{\al}}l(\G_1), \quad l(\tilde{\G}_2)=\f{1}{\sqrt{1-\al}}l(\G_2),\\ \int_{\tilde{\Om}_i}&|\na\tilde{\T}_i|^2=\int_{\Om_i}|\na \T|^2 \text{ for }i=1,2
\end{align*}
The minimizing property yields
\begin{align*}
&\f{1}{\sqrt{\al}}l(\G_1)+\int_{\Om_1}|\na\T|^2\geq l(\G_1)+l(\G_2)+\int_{\Om_1}|\na\T|^2+\int_{\Om_2}|\na\T|^2,\\ &\f{1}{\sqrt{1-\al}}l(\G_2)+\int_{\Om_2}|\na\T|^2\geq l(\G_1)+l(\G_2)+\int_{\Om_1}|\na\T|^2+\int_{\Om_2}|\na\T|^2.
\end{align*}
Combining these two inequalities we arrive at
\begin{align*}
&l(\G_1)\geq \f{\sqrt{\al}}{1-\sqrt{\al}}l(\G_2)\geq \f{\sqrt{\al}}{1-\sqrt{\al}}\cdot\f{\sqrt{1-\al}}{1-\sqrt{1-\al}}l(\G_1)\\
\Rightarrow& \sqrt{\al(1-\al)}\leq (1-\sqrt{\al})(1-\sqrt{1-\al})\Rightarrow \al=0 \text{ or }1,
\end{align*}
which yields a contradiction.

\end{proof}

Now we want to prove some geometric properties of $\G\in
\mathcal{G}_1$ (not necessarily a minimizer). The next statement says that for any three points on $\Gamma$, they are supposed to satisfy a reversed triangle inequality, with a constant depending on $E(\G)$.
\begin{lemma}\label{reversedtriangleineq}
If $\G\in \mathcal{G}_1$ and $E(\G)\leq M$, then there exists a constant $C=C(M)$ such that for any three points $z_1=(x(t_1),y(t_1))$, $z_2=(x(t_2),y(t_2))$ and $z_3=(x(t_3),y(t_3))$ on $\G$ such that $t_1<t_2<t_3$, it holds that
\beq\label{reversetri}
\max{\{\dist(z_1,z_2),\dist(z_2,z_3)\}}\leq C \dist(z_1,z_3).
\eeq
\end{lemma}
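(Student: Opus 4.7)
The plan is a proof by contradiction: assume $R := \max\{\dist(z_1,z_2),\dist(z_2,z_3)\} > C\epsilon$ for some large $C = C(M)$ to be chosen, where $\epsilon := \dist(z_1,z_3)$, and derive a lower bound for $\int_{\Om_\G}|\na\T|^2$ that exceeds $M$. The crucial structural input is condition (iv), $\dot x \geq 0$, which gives $x(z_1) \leq x(z_2) \leq x(z_3)$ and confines the sub-arc $\G_{13}$ from $z_1$ to $z_3$ to the vertical strip $[x(z_1),x(z_3)]\times\BR$ of width at most $\epsilon$. Since $\dist(z_1,z_2) = R$ (without loss of generality) forces $|y(z_2) - y(z_1)| \geq \sqrt{R^2-\epsilon^2}$, the arc $\G_{13}$ is thin horizontally yet tall vertically. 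A brief case analysis on $y(z_2)$ eliminates the sandwich case $\min(y(z_1),y(z_3)) \leq y(z_2) \leq \max(y(z_1),y(z_3))$ as soon as $R > 2\epsilon$, leaving the peak case ($y(z_2) > y(z_1),y(z_3)$) and the valley case ($y(z_2) < y(z_1),y(z_3)$).

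In the peak case, monotonicity of $x$ makes $\G_{13}$ graph-like over $x \in [x(z_1),x(z_3)]$ with maximum near $z_2$. For each height $h \in (\max(y(z_1),y(z_3)),y(z_2))$ the line $\{y=h\}$ meets $\G_{13}$ at an ascending crossing $(x_L(h),h)$ and a descending crossing $(x_R(h),h)$ with $x_R(h)-x_L(h) \leq \epsilon$, and the segment between them lies in $\overline{\Om_\G}$. A short computation using that the horizontal displacement along each of the ascending and descending sub-arcs is at most $\epsilon$ shows that outside an $O(\epsilon)$-measure set of heights both tangent angles are steep: $\T(x_L(h),h) \geq \pi/3$ and $\T(x_R(h),h) \leq -\pi/3$, so their difference is at least $2\pi/3$. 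Since the total admissible range of $h$ has length $\gtrsim R$, the good subset has measure $\gtrsim R$. Cauchy-Schwarz along each such horizontal segment yields
\[
\frac{(2\pi/3)^2}{\epsilon} \leq \int_{x_L(h)}^{x_R(h)} |\partial_x \T(x,h)|^2 \, dx,
\]
and integrating in $h$ by Fubini gives $\int_{\Om_\G}|\na\T|^2 \gtrsim R/\epsilon$. Together with $E(\G) \leq M$ this forces $R/\epsilon \leq C_0(M)$, contradicting the assumption if $C > C_0(M)$.

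The valley case is the main obstacle, because the two steep cliffs of $\G_{13}$ are separated by a region lying \emph{outside} $\Om_\G$, so the horizontal-segment argument no longer applies directly. My plan there is to combine two complementary ingredients: first, in Case B one has $y(z_1), y(z_3) \gtrsim R$, so the portions of $\G$ joining the endpoints $(\pm a,0)$ to $z_1$ and to $z_3$ contribute a length of order $R$, and the bound $l(\G) \leq E(\G) \leq M$ already constrains $R$; second, vertical segments joining steep portions of each cliff down to the $x$-axis (where $\T = 0$) yield, via Cauchy-Schwarz and Fubini, Hardy-type lower bounds on the Dirichlet energy in terms of the cliff heights. Balancing these two estimates against the assumption $R > C\epsilon$ should again recover the bound $R \leq C(M)\epsilon$.
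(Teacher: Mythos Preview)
Your peak case is sound and is a genuine alternative to the paper's argument. Where the paper integrates over circles centered at $z_2$ and obtains a logarithmic lower bound $\int|\nabla\Theta|^2 \geq \log\frac{C-1}{8}$, you slice by horizontal lines and get the stronger linear bound $\gtrsim R/\epsilon$; both suffice. The one point that deserves more care is that at a given height $h$ the sub-arcs $\Gamma_1,\Gamma_2$ may cross $\{y=h\}$ many times, so ``the'' ascending and descending crossing must be specified: taking $x_L(h)$ to be the \emph{last} crossing on $\Gamma_1$ and $x_R(h)$ the \emph{first} on $\Gamma_2$ makes the segment between them lie in $\overline{\Omega_\Gamma}$ (the sub-arc of $\Gamma_{13}$ between these two times stays above $h$, so the segment together with this sub-arc bounds a region inside $\Omega_\Gamma$). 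Your co-area argument that the bad heights have measure $O(\epsilon)$ then goes through.

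The valley case, however, has a real gap. Your first ingredient yields only $R\lesssim M$, which says nothing about $R/\epsilon$. Your second ingredient---vertical segments from steep points on a cliff down to the $x$-axis---gives at best $\int_0^{y}|\partial_y\Theta|^2\,dy \gtrsim 1/y$ at horizontal position $x$, and since the cliffs live in a strip of width $\leq\epsilon$, Fubini over $x$ produces a bound of order $\epsilon\cdot(\text{something bounded})$, which is \emph{small} when $R\gg\epsilon$ and yields no contradiction. There is no ``balancing'' that recovers $R\leq C(M)\epsilon$ from these two estimates. The obstruction you correctly identify---that the horizontal segment between the two cliffs lies \emph{outside} $\Omega_\Gamma$---is exactly what the paper overcomes by switching to circles centered at $z_2$: the \emph{lower} arc of $\{|z-z_2|=r\}$ between the first crossings with $\Gamma_1$ and $\Gamma_2$ does lie in $\Omega_\Gamma$ (or, if the circle meets the $x$-axis or another part of $\Gamma$, one still finds a sub-arc in $\Omega_\Gamma$ joining two boundary points whose $\Theta$-values differ by at least $\pi/3$). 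This requires a short sub-case analysis, but it is the missing geometric idea in your plan.
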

\begin{proof}
Assume $C$ is a large enough number (say larger than 100) which will be determined later. We simply write $(x(t_i),y(t_i))$ as $(x_i,y_i)$ for $i=1,2,3$. Without loss of generality, we assume $y_1\geq y_3$. Then for the value of $y_2$, there are three cases:
\begin{enumerate}
  \item $y_1\geq y_2\geq y_3$,
  \item $y_2\geq y_1\geq y_3$,
  \item $y_1\geq y_3\geq y_2$.
\end{enumerate}

The inequality \eqref{reversetri} for the first case is trivial, because by simple geometry we can get
\beqo
\max{\{\dist(z_1,z_2),\dist(z_2,z_3)\}}\leq  \dist(z_1,z_3).
\eeqo
Now we study the second case, and assume \eqref{reversetri} is false. By triangle inequality, we have
\beqo
\min{\{\dist(z_1,z_2),\dist(z_2,z_3)\}}\geq (C-1) \dist(z_1,z_3).
\eeqo
Therefore it holds that
\beqo
\min\{y_2-y_1,y_2-y_3\}\geq (C-2)|x_1-x_3|.
\eeqo

For convenience we assume $y_2=\max_{t\in(t_1,t_3)}y(t)$. Otherwise we can take $z_2$ to be the point with the maximum value of $y$ on $\G$ between $z_1$ and $z_3$. Note that such choice will not violate any of the above estimates.

We set the curve $\G$ between $z_1,z_2$ and $z_2,z_3$ as $\G_1,\G_2$, written as $\G_1:=\G_{z_1z_2},\, \G_2:=\G_{z_2z_3}$.  Set $l_i:=l(\G_i)$ for $i=1,2$. Also we reparametrize $\G_1$ and $\G_2$ as following
\begin{align*}
\G_1:=\{&(x(s), y(s)): s\in [0,l_1],\,( x(0),y(0))=(x_2,y_2),\,(x(l_1),y(l_1))=(x_1,y_1),\\
& x'(s)\leq 0,\,|x'(s)|^2+|y'(s)|^2=1\;a.e.\},\\
\G_2:=\{&(x(s), y(s)): s\in [0,l_2],\,( x(0),y(0))=(x_2,y_2),\,(x(l_2),y(l_2))=(x_3,y_3),\\
& x'(s)\geq 0,\, |x'(s)|^2+|y'(s)|^2=1\;a.e.\}
\end{align*}
Note that for such reparametrization, $\G_1$ starts at $z_2$ and ends at $z_1$, while $\G_2$ starts at $z_2$ and ends at $z_3$. Also we have $\Theta(x(s),y(s))=-\arcsin{y'(s)}$ on $\G_1$ and $\Theta(x(s),y(s))=\arcsin{y'(s)}$ on $\G_2$.

We first look at $\G_1$. Set
\begin{align*}
r(s)&:=\sqrt{|x(s)-x_2|^2+|y(s)-y_2|^2},\; s\in[0,l_1],\\
I(r)&:=\{s\in [0,l_1]: r(s)=r\}
\end{align*}
For $0<r<|z_1-z_2|$, the circle $\{|z-z_2|=r\}$ will intersect with $\G_1$ and therefore $I_r$ is not empty. By definition we have
\begin{align}
\label{yint} &\int_0^{l_1} y'(s)\,ds=y_1-y_2,\\
\label{xint} &\int_0^{l_1} x'(s)\,ds=x_1-x_2.
\end{align}
For $r(s)$, we can estimate its derivative by
\beqo
|r'(s)|=\left|\f{(x(s)-x_2)\cdot x'(s)+(y(s)-y_2)\cdot y'(s)}{r(s)}\right|\leq \sqrt{|x'(s)|^2+|y'(s)|^2} =1.
\eeqo
Then by coarea formula, we have
\beqo
l_1\geq \int_0^{l_1}|r'(s)|\,ds=\int_0^{|z_1-z_2|} \mathcal{H}^0(I_r)\,dr
\eeqo
This tells us that for almost every $r\in [0,|z_1-z_2|]$, $\mathcal{H}^0(I_r)$ is finite. Note that $\mathcal{H}^0$ is just the counting measure, and we will simply write it as $|I_r|$. Denote by $A$ the subset of $[0,l_1]$ such that for any $s\in A$, $r'(s)=0$. Again co-area formula gives
\beqo
0=\int_A |r'(s)|\,ds=\int_0^{|z_1-z_2|} \mathcal{H}^0(A\cap I_r)\,dr
\eeqo
So $A\cap I_r=\varnothing$ for a.e. $r\in [0,|z_1-z_2|]$. We define
\beqo
R_0:=\{ r\in[0,|z_1-z_2|]:\, |I_r| \text{ is finite, and }|r'(s)|>0\text{ for any }s\in I_r \}.
\eeqo
We have $m([0,|z_1-z_2|]\backslash R_0)=0$. For any $r\in R_0$, we pick a representative from $I_r$ in the following way:
\beqo
s^r=\min\{s: s\in I_r\}.
\eeqo
We define the following two subsets:
\begin{align*}
R_1&:=\{r\in[2[z_1-z_3],|z_1-z_2|]\cap R_0: x'(s^r)\leq-1/2\},\\ R_2&:=\left(\left[2|z_1-z_3|,|z_1-z_2|\right]\cap R_0\right)\backslash R_1.
\end{align*}
Note that $R^1$ corresponds to the part of curve on $\G_1$ where is not ``too vertical". Using co-area formula again, we get
\begin{align*}
\f{1}{2} m(R_1) &\leq \left|\int_{R_1} \f{d x(s^r)}{ds}\cdot\left|\f{dr(s^r)}{ds}\right|^{-1}\,dr\right|\\
&\leq |\int_{r^{-1}(R_1)} x'(s)\,ds|\\
&\leq |x_1-x_2|.
\end{align*}
As a consequence, we get
\beq\label{measureR2}
m(R_2)\geq |z_1-z_2|-2|z_1-z_3|-2|x_1-x_2|\geq |z_1-z_2|-4|z_1-z_3|.
\eeq
Now we make the following observation:
\begin{center}
For any $r\in R_2$, $y'(s^r)<-\f{\sqrt{3}}{2}$.
\end{center}
This is a consequence of definition of $R_2$ and $s^r$. Since $r\in R_2$, we have
\begin{align*}
y'(s^r)>\f{\sqrt{3}}{2}&\text{ or  }y'(s^r)<-\f{\sqrt{3}}{2},\\
y(s^r)<y_2-\sqrt{3}|z_1-z_3|&, \quad x_2-|z_1-z_3| \leq x(s^r)<x_2,\\
|y(s^r)-y_2|&>\sqrt{3}|x(s^r)-x_2|.
\end{align*}
We also have that $\f{dr(s^r)}{ds}> 0$ because $(x(s^r),y(s^r))$ is the first point that $\G_1$ touches $\{|z-z_2|=r\}$. If $y'(s^r)>\f{\sqrt{3}}{2}$, then
\beqo
r'(s^r)=\f{x'(s^r)(x(s^r)-x_2)+y'(s^r)(y(s^r)-y_2)}{r}<0,
\eeqo
which yields a contradiction. Therefore we have verified the observation.

Now we deal with $\G_2$ in the same way with several minor modifications. We can show that there exists a $R_3$ such that \begin{align*}
R_3\subset &\left[2|z_1-z_3|, |z_2-z_3|\right], \quad m(R_3)\geq |z_2-z_3|-4|z_1-z_3|,\\
 &\text{ and }\;\forall r\in R_3, \;y'(s_r)<-\f{\sqrt{3}}{2}.
\end{align*}
Here $s_r$ is the point that $\G_2$ first touches $\{|z-z_2|=r\}$.

We are now ready to derive a contradiction. Denoting $R:=R_2\cap R_3$, then we have
\beqo
R\subset \left\{r: 2|z_1-z_3|\leq r\leq  \min\{|z_1-z_2|,|z_2-z_3|\}\right\},\quad m(R)\geq  \min\{|z_1-z_2|,|z_2-z_3|\}-8|z_1-z_3|.
\eeqo
For any $r\in R$, $\G_1$ first intersects $\{|z-z_2|=r\}$ at $z_1(r):=(x(s^r),y(s^r))$ and $\G_2$ first intersects with $\{|z-z_2|=r\}$ at $z_2(r):=(x(s_r),y(s_r))$. The arc $\arc{z_1(r)z_2(r)}$ is contained in $\Om_\G$ because of the definitions of $s^r,s_r$. Moreover, $\Theta=-\arcsin{y'(s^r)}>\f{\pi}{3}$ at $z_1(r)$ and  $\Theta=\arcsin{y'(s_r)}<-\f{\pi}{3}$ at $z_2(r)$. Then we are ready to estimates the Dirichlet energy of $\T$ in $\Om_\G$,
\begin{align}
\label{computeenergy}\int_{\Om_\G}|\na \T|^2\,dxdy\geq & \int_0^{C|z_1-z_3|} \,dr\int_{\{|z-z_2|=r\}\cap \Om_\G} |\na \Theta(z_1+re^{i\theta})|^2 \\
\nonumber\geq & \int_{r\in R} \f{|\T(z_1(r))-\T(z_2(r))|^2}{\pi r} \,dr\\
\nonumber\geq & \int_{8|z_1-z_3|}^{(C-1)|z_1-z_3|} \f{4\pi}{9}\f{1}{r}\,dr\\
\nonumber\geq & \log{\f{C-1}{8}}.
\end{align}
By choosing $C$ satisfying $\log{\f{C-1}{8}}\geq 2M$, we arrive at a contradiction with the energy bound. Thus we proved \eqref{reversetri} for case (2).

For case (3) when $y_1\geq y_3\geq y_2$, the proof follows similar arguments. Assume $y_2=\min_{t\in[t_1,t_3]} y(t)$. We still call the curve between $z_1,z_2$ and $z_2,z_3$ as $\G_1,\G_2$ and reparametrize them as before. And $r(s)$, $I(r)$, $R_0$, $s^r$, $s_r$, $z_1(r)$, $z_2(r)$ are all defined in the same way. Recall that $z_1(r):=(x(s^r),y(s^r))\in \G_1$ and $z_2(r):=(x(s_r),y(s_r))\in\G_2$.  Similarly, we can find $R_2\subset [2|z_1-z_3|, |z_1-z_2|]$ such that for any $r\in R_2$, $\T(z_1(r))<-\f{\pi}{3}$. Also there exists $R_3\subset [2|z_1-z_3|, |z_1-z_3|]$ such that for $r\in R_3$, $\T(z_2(r))>\f{\pi}{3}$.

Now we claim that for any $r\in R:=R_2\cap R_3$, it holds that
\beq\label{gradientoncircle}
\int_{\{|z-z_2|=r\}\cap \Om_f} |\na \Theta(z_1+re^{i\theta})|^2 \geq \f{C_1}{r}.
\eeq
Here $C_1$ is a constant that can be chosen as $\f{\pi}{18}$. This is the place where case (3) differs from case (2), because in case (2) the set $\{|z-z_2|=r\}\cap \Om_\G$ is just the arc $\arc{z_1(r)z_2(r)}$. However in case (3), $\{|z-z_2|=r\}\cap \Om_\G$ is more complicated.
We prove the claim by discussing following three situations (see Figure 2):
\begin{enumerate}
  \item $\{|z-z_2|=r\}$ only intersects with $\G$ at $z_1(r), z_2(r)$ and doesn't intersect with $x$-axis. Since $\T(z_1(r))<-\f{\pi}{3}$ and $\T(z_2(r))>\f{\pi}{3}$, we have
      \beqo
      \int_{\{|z-z_2|=r\}\cap \Om_\G} |\na \Theta(z_2+re^{i\theta})|^2\geq \f1r\int_{z_1+re^{i\theta}\in \Om_f} |\f{\pa\T}{\pa\theta}|^2d\theta\geq \f{C_1}{r}.
      \eeqo
  \item $\{|z-z_2|=r\}$ only intersects with $\G$ at $z_1(r), z_2(r)$ and also intersects with $x$-axis at $z_3(r)$. Without loss of generality we can assume the arc $\arc{z_1(r)z_3(r)}$ is contained in $\Om_f$. Then since $\T=0$ on $x$-axis, we have $|\T(z_1(r))-\T(z_3(r))|\geq \f{\pi}{3}$, then we can verify (\ref{gradientoncircle}) by the same calculation.
  \item $\{|z-z_2|=r\}$ intersects with $\g_f$ at more than two points. Let $z_3(r)=(x_3(r),y_3(r))$ be another point of intersection besides $z_1(r), z_2(r)$. Without loss of generality we assume $x_3(r)<x_1(r)$.  In our construction we make sure that $r'(s^r)>0$ at $z_1(r)$. And we can assume $\arc{z_1(r)z_3(r)}\subset \bar{\Om}_f$.  Therefore we have $\T(z_3(r))\geq 0$ because at $z_3(r)$, $\G(t)$ is "leaving" the disk $\{|z-z_2|\leq r\}$ as $t$ increases. This implies that $|\T(z_1(r))-\T(z_3(r))|\geq \f{\pi}{3}$ Then (\ref{gradientoncircle}) follows immediately in the same way.
\end{enumerate}

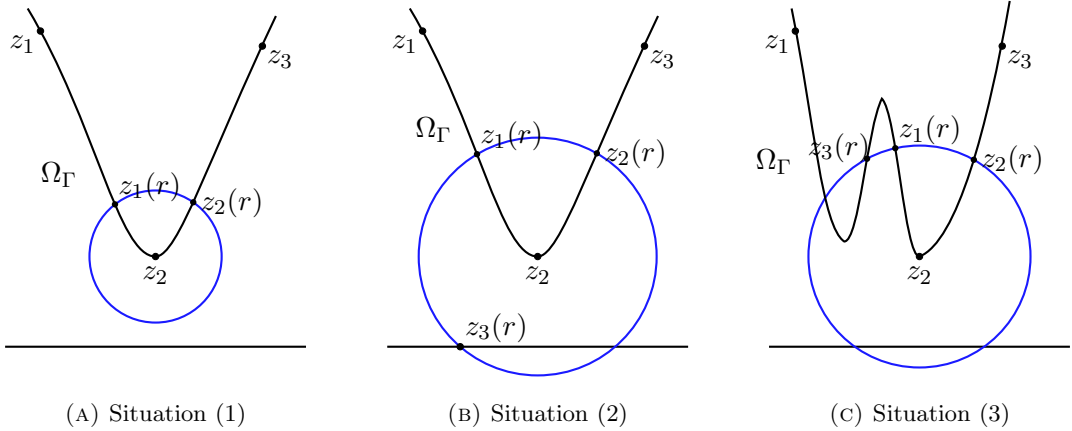
\begin{figure}[H]
\captionsetup[subfigure]{font=footnotesize}
\centering
\subcaptionbox{Situation (1)}[.3\textwidth]{
\begin{tikzpicture}
\clip(-2.2,-0.5) rectangle (2.2, 4.7);
\draw[thick] (-2,0)--(2,0);
\draw[thick, color=blue!90] (0,1.2)  circle [radius=25pt];
\path [name path=circ] (0,1.2)  circle [radius=25pt];
\path [name path=g1] (-1.7,4.5)
  .. controls  (-0.8,3) and (-0.5,1.2) ..  (0,1.2);
\path [name path=g2] (0,1.2)
  .. controls  (0.3,1.2) and (0.7,2.5) ..  (1.6,4.4);
\filldraw[black] (0,1.2) circle[radius=1.2pt] node[below] {$z_2$};
\filldraw[black] (-1.53,4.2) circle[radius=1.2pt] node[below left=-2pt] {$z_1$};
\filldraw[black] (1.42,4.0) circle[radius=1.2pt] node[below right=-2pt] {$z_3$};
\draw[black,thick] (-1.7,4.5)
  .. controls  (-0.8,3) and (-0.5,1.2) .. node[left=0.2cm] {$\Omega_\Gamma$} (0,1.2)
  .. controls  (0.3,1.2) and (0.7,2.5) ..  (1.6,4.4);
\fill[name intersections={of=g1 and circ, by=x1}] (x1) circle [radius=1.2pt] node[above right=-2.5pt ]{$z_1(r)$} ;
\fill[name intersections={of=g2 and circ, by=x2}] (x2) circle [radius=1.2pt] node[right=-1pt ]{$z_2(r)$} ;
\end{tikzpicture}
}
\subcaptionbox{Situation (2)}[.3\textwidth]{
\begin{tikzpicture}
\clip(-2.2,-0.5) rectangle (2.2, 4.7);

\draw[thick] (-2,0)--(2,0);
\draw[thick, color=blue!90] (0,1.2)  circle [radius=45pt];
\path [name path=circ] (0,1.2)  circle [radius=45pt];
\path [name path=g1] (-1.7,4.5)
  .. controls  (-0.8,3) and (-0.5,1.2) ..  (0,1.2);
\path [name path=g2] (0,1.2)
  .. controls  (0.3,1.2) and (0.7,2.5) ..  (1.6,4.4);
\filldraw[black] (0,1.2) circle[radius=1.2pt] node[below] {$z_2$};
\filldraw[black] (-1.03,0) circle[radius=1.2pt] node[above right=-2pt] {$z_3(r)$};
\filldraw[black] (-1.53,4.2) circle[radius=1.2pt] node[below left=-2pt] {$z_1$};
\filldraw[black] (1.42,4.0) circle[radius=1.2pt] node[below right=-2pt] {$z_3$};
\draw[black,thick] (-1.7,4.5)
  .. controls  (-0.8,3) and (-0.5,1.2) .. node[above left=0.3cm] {$\Omega_\Gamma$} (0,1.2)
  .. controls  (0.3,1.2) and (0.7,2.5) ..  (1.6,4.4);
\fill[name intersections={of=g1 and circ, by=x1}] (x1) circle [radius=1.2pt] node[above right=-2.5pt ]{$z_1(r)$} ;
\fill[name intersections={of=g2 and circ, by=x2}] (x2) circle [radius=1.2pt] node[right=-1pt ]{$z_2(r)$} ;
\end{tikzpicture}

}
\subcaptionbox{Situation (3)}[.3\textwidth]{
\begin{tikzpicture}
\clip(-2.2,-0.5) rectangle (2.2, 4.7);
\draw[thick] (-2,0)--(2,0);
\draw[thick, color=blue!90] (0,1.2)  circle [radius=42pt];
\path [name path=circ] (0,1.2)  circle [radius=42pt];
\path [name path=g1] (-0.5,3.3)
  .. controls  (-0.3,3.1) and (-0.2,1.2) ..  (0,1.2);
\path [name path=g2] (0,1.2)
  .. controls  (0.5,1.4) and (0.9,2.9) ..  (1.2,4.6);
\path [name path=g3] (-1,1.4)
  .. controls  (-0.8,1.4) and (-0.7,2.9) ..  (-0.5,3.3);
\filldraw[black] (0,1.2) circle[radius=1.2pt] node[below] {$z_2$};
\filldraw[black] (-1.65,4.2) circle[radius=1.2pt] node[below left=-2pt] {$z_1$};
\filldraw[black] (1.1,4.0) circle[radius=1.2pt] node[below right=-2pt] {$z_3$};
\draw[black,thick] (-1.7,4.5)
  .. controls  (-1.4,3) and (-1.3,1.6) .. node[left=0.2cm] {$\Omega_\Gamma$} (-1,1.4)
  .. controls  (-0.8,1.4) and (-0.7,2.9) ..  (-0.5,3.3)
  .. controls  (-0.3,3.1) and (-0.2,1.2) ..  (0,1.2)
  .. controls  (0.5,1.4) and (0.9,2.9) ..  (1.2,4.6) ;
\fill[name intersections={of=g1 and circ, by=x1}] (x1) circle [radius=1.2pt] node[above right=-2.5pt ]{$z_1(r)$} ;
\fill[name intersections={of=g2 and circ, by=x2}] (x2) circle [radius=1.2pt] node[right=-1pt ]{$z_2(r)$} ;
\fill[name intersections={of=g3 and circ, by=x3}] (x3) circle [radius=1.2pt] node[above left=-5pt ]{$z_3(r)$} ;
\end{tikzpicture}

}
\centering
\caption{Three different situations of $I_r$ and $\{|z-z_2|=r\}\cap \Om_\G$,  when $y_1\geq y_3>y_2$}
\label{fig2}
\end{figure}

With (\ref{gradientoncircle}), we can repeat the computation in \eqref{computeenergy} and finally verifies \eqref{reversetri} for case (3). This completes our proof of Lemma\ref{reversedtriangleineq}.

\end{proof}

A direct consequence of Lemma \ref{reversedtriangleineq} is that $\G$ satisfy the following ``two point condition":
\beqo
\text{For any }z_1,z_2\in\G\text{, set }\g\text{ be the arc of }\G\text{ between }z_1,z_2\text{, then }\mathrm{diam} \,\g\leq C |z_1-z_2|.
\eeqo

It is proved by Ahlfors in \cite{ahlfors} that a Jordan curve is a quasicircle if and only if it satisfies the ``two point condition". A quasicircle is the image of the unit circle $\mathbb{T}$ under a quasiconformal mapping of the complex plane onto itself. And in 2D a quasidisk (domain enclosed by a quasicircle) is equivalent to a Sobolev extension domain, see \cite{Jones}. However in our problem, $\pa\Om_\G=\G\cap \{(x,0):x\in[-a,a]\}$ is not a quasicircle because near two endpoints $(-a,0)$ and $(a,0)$ the ``two point condition" will be violated.
The next lemma says that even though we cannot directly use the property of a quasidisk, we can still extend $\T$ to the whole plane with a uniform control on its $H^1$-norm.

\begin{lemma}\label{extension}
(Extension domain) Assume $\G\in\mathcal{G}_1$ and $E(\G)\leq M$, then there exists a constant $C_2$ that depends on $M$ such that $\T|_{\overline{\Om}_\G}$ can be extended to the whole plane with a norm control
\beqo
\|\T\|_{H^1(\mathbb{R}^2)}\leq C_2 \|\T\|_{H^1(\Om_\G)}.
\eeqo
\end{lemma}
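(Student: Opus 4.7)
The plan is to exploit the boundary condition $\T \equiv 0$ on $[-a,a]\times\{0\}$ by reflecting $\T$ oddly across the $x$-axis, thereby replacing the mixed-boundary extension problem on $\Om_\G$ by a pure-interior extension problem on a symmetric domain bounded by a single Jordan curve. Explicitly, set
\begin{equation*}
\widetilde{\T}(x,y):=\begin{cases} \T(x,y), & y\geq 0,\\ -\T(x,-y), & y<0,\end{cases}
\end{equation*}
on $\widetilde{\Om}:=\Om_\G\cup\bigl([-a,a]\times\{0\}\bigr)\cup\{(x,y):(x,-y)\in\Om_\G\}$. Since the boundary condition \eqref{bdycondition} gives $\T=0$ along $[-a,a]\times\{0\}$ in the trace sense, $\widetilde{\T}$ belongs to $H^1(\widetilde{\Om})$ with $\|\widetilde{\T}\|_{H^1(\widetilde{\Om})}^2 = 2\|\T\|_{H^1(\Om_\G)}^2$. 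The boundary $\partial\widetilde{\Om}$ is the Jordan curve $\G\cup\widetilde{\G}$, where $\widetilde{\G}$ is the image of $\G$ under reflection across the $x$-axis.

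Next I would invoke the classical equivalence (Ahlfors' two-point characterization together with Jones' extension theorem) that a planar Jordan domain is a Sobolev $H^1$-extension domain if and only if its boundary satisfies the reverse-triangle (two-point) condition. I would use Lemma 2.2 and its remark to verify the two-point condition for $\partial\widetilde{\Om}$: for pairs of points both on $\G$ (or both on $\widetilde{\G}$) the estimate is immediate from \eqref{reversetri}, and for a mixed pair $z_1\in\G$, $z_2\in\widetilde{\G}$, the shorter of the two boundary arcs passes through one of $(\pm a,0)$ and its diameter is controlled by the sum of the two chord-arc diameters on $\G$ and $\widetilde{\G}$, which together are comparable to $|z_1-z_2|$.

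The main obstacle is the mixed-pair case precisely when both $z_1$ and $z_2$ are very close to an endpoint $(\pm a,0)$ at which $\G$ could in principle meet the $x$-axis tangentially (a cusp). In that regime, $|z_1-z_2|$ may be of order $\mathrm{Im}(z_1)+|\mathrm{Im}(z_2)|$, which is much smaller than $|z_1-(-a,0)|$, and a direct application of Lemma 2.2 is not enough. The plan here is to revisit the circular-arc argument used in the proof of Lemma 2.2: if the two-point condition failed on such a mixed pair with a sufficiently large constant, then for a large range of radii $r$ the circle $\{|z-(-a,0)|=r\}$ would separate points on $\G$ and on $\widetilde{\G}$ where $\T$ takes values of opposite signs with $|\T|\geq \pi/3$; a coarea-type Dirichlet-energy lower bound as in \eqref{computeenergy} then contradicts the energy bound $E(\G)\leq M$. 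In this way the constant in the two-point condition for $\G\cup\widetilde{\G}$ is controlled by $M$ alone.

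Once $\widetilde{\Om}$ is known to be a quasidisk with constant depending only on $M$, Jones' extension theorem supplies a linear extension operator $\mathcal E\colon H^1(\widetilde{\Om})\to H^1(\mathbb{R}^2)$ with operator norm bounded by a constant $C=C(M)$. Applying $\mathcal E$ to $\widetilde{\T}$ and composing with the factor of $\sqrt{2}$ from the reflection step yields the required extension with
\begin{equation*}
\|\mathcal E\widetilde{\T}\|_{H^1(\mathbb R^2)}\leq C(M)\|\widetilde{\T}\|_{H^1(\widetilde{\Om})}\leq \sqrt{2}\,C(M)\|\T\|_{H^1(\Om_\G)},
\end{equation*}
which gives the constant $C_2=C_2(M)$ in the statement.
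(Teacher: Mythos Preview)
Your reflection idea is natural, but it runs into a genuine obstruction. The symmetrized domain $\widetilde{\Om}$ bounded by $\G\cup\widetilde{\G}$ is \emph{not} a quasidisk in general: as the paper shows later in Lemma~\ref{cusp}, every $\G\in\mathcal{G}_1$ with $E(\G)\leq M$ meets the $x$-axis tangentially at $(\pm a,0)$, so $\widetilde{\Om}$ has genuine outward cusps there and the Ahlfors two-point condition fails with arbitrarily large constant. Your proposed salvage via the circular-arc energy argument cannot close this gap, because the mechanism of that argument is that $|\T|\geq\pi/3$ on the two intersection points; near a cusp the tangent of $\G$ is nearly horizontal, so $\T=\arcsin y'\to 0$, and no uniform oscillation of $\widetilde{\T}$ across the arc is available. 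In short, finite Dirichlet energy is \emph{compatible} with cusps at the endpoints (indeed, it forces them), so you cannot hope to rule them out by an energy lower bound.

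The paper sidesteps this by attaching below $\Om_\G$ not the reflection but a fixed rectangle $D_1=(-a,a)\times(-a,0]$, extending $\T$ by zero there. The vertical sides of $D_1$ leave $(\pm a,0)$ perpendicularly to the $x$-axis, so for a mixed pair $z_1\in\G$, $z_2\in\{\pm a\}\times[-a,0]$ one has $|z_1-z_2|\geq |z_1-(\pm a,0)|$ automatically (they lie in opposite half-planes and $z_2$ sits directly below the corner). This makes the two-point constant uniform in $M$ regardless of how flat $\G$ is at the endpoints, and then Jones' theorem applies to $D_\G=\Om_\G\cup D_1$. The rectangle, not the reflection, is the point of the proof.
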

\begin{proof}
The idea of the proof is to add a rectangle to $\Omega_\G$ to make the combined domain a quasidisk. Assume the intersection points of $\G$ and $x$-axis are $(-a,0)$ and $(a,0)$, we set
\beqo
D_1:=\{(x,y): -a< x< a,\, -a <y\leq 0\},\quad D_\G=\Omega_\G\cup D_1.
\eeqo

We claim that $D_\G$ is a quasidisk. For any two points $z_1,z_2\in\pa D_\G$, if $z_1,z_2\in\G$, then Lemma \ref{reversedtriangleineq} says they satisfy the ``two point condition". If $z_1,z_2\in\pa D_1\cap \pa D_\G$, they automatically satisfy the ``two point condition" because a rectangle is a quasidisk. So we are left with the case $z_1\in \G, \, z_2\in \pa D_1\cap \pa D_\G$. In such case, there are two situations:
\begin{enumerate}
\item $z_2\in \{(x,-a): -a\leq x\leq a\}$. Then $|z_1-z_2|\geq a$ and Lemma \ref{reversedtriangleineq} implies that $\mathrm{dia}(D_\G)\leq (C+2)a$ where $C$ is the constant in \eqref{reversetri}. So the ``two point condition" holds for this situation.
\item $z_2\in \{(-a,y): -a\leq y\leq 0\}\cup \{(a,y):-a\leq y\leq 0\}$. Without loss of generality we assume $z_2=(-a, y)$ and set $z_0:=(-a,0)$. Let $\g$ be the arc of $\G$ between $z_0$ and $z_1$, then we get
    \beqo
    |z_1-z_2|\geq \max\{|y|, |z_1-z_0|\},\quad \mathrm{diam}(\overline{z_2z_0}\cup \g)\leq |y|+\mathrm{diam}(\g),
    \eeqo
    where $\overline{z_2z_0}$ is the line segment between $z_0$ and $z_2$. One can easily shows that $\mathrm{diam}(\overline{z_2z_0}\cup\g)\leq (C+1)|z_1-z_2|$, since $\mathrm{diam}\,{\g}\leq C|z_1-z_0|$ by \eqref{reversetri}. The ``two point condition" is verified.
\end{enumerate}

Therefore by Ahlfors' result, we prove the claim. Next we can trivially extend $\T$ to $D_\G$ by let $\T(x,y)\equiv 0$ for $(x,y)\in D_1$, because $\T$ vanishes on $\{(x,0): -a\leq x\leq a\}$. Obviously, $\|\T\|_{H^1(D_\G)}=\|\T\|_{H^1(\Om_\G)}$. Moreover, we can further extend $\T$ to the whole plane $\mathbb{R}^2$, since a 2D domain is a quasidisk if and only if it is a Sobolev extension domain. And there exists $C_2:=C_2(M)$ such that
\beqo
\|\T\|_{H^1(\mathbb{R}^2)}\leq C_2\|\T\|_{H^1(\Om_\G)}.
\eeqo

This completes our proof of Lemma \ref{extension}.

\end{proof}

\subsection{$\G$ is a chord-arc curve}
We will give more geometric properties of $\G$ by showing that it is a chord-arc curve,  which means the length of the chord is comparable with the length of the arc (see \cite{JK} for a detailed discussion on chord-arc curves).

\begin{proposition}\label{chordarc}
Let $\G\in \mathcal{G}_1$ and $E(\G)\leq M$. There exists a constant $C_3(M)$ such that for any two points $z_1,z_2\in\G$ and the arc $\g:=\G_{z_1z_2}$, we have
\beqo
l(\g)< C_3|z_1-z_2|.
\eeqo
In other words, $\G$ is a chord-arc curve.
\end{proposition}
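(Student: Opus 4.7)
I would prove the chord-arc bound by contradiction, combining the reverse triangle inequality of Lemma \ref{reversedtriangleineq} with a Dirichlet-energy lower bound in the spirit of the circle-integral argument used in that same lemma. Suppose $L:=l(\g)>Kd$ where $d:=|z_1-z_2|$ and $K$ is a constant to be chosen large in terms of $M$; I aim to push $\int_{\Om_\G}|\na\T|^2$ beyond $M$, contradicting the energy bound. As a first step, Lemma \ref{reversedtriangleineq} applied to every interior point $w$ of $\g$ confines the whole arc to $B(z_1,C_0 d)$ with $C_0=C_0(M)$, so the long arc $\g$ is trapped in a ball of diameter $O(d)$.

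The key structural step uses condition (iv): because $\dot x(t)\ge 0$, the curve is monotone in $x$, and the excess $L-d$ must come entirely from vertical oscillation of $y(s)$. Since $y$ ranges in an interval of length $\le 2C_0 d$, the total variation of $y$ along any single maximal monotonicity sub-arc of $\g$ is at most $2C_0 d$. On the other hand, writing $L-\Delta x=\int_\g(1-\cos\T)\,ds$ and using $\int_\g|\sin\T|\,ds\ge \sqrt{L(L-\Delta x)}\ge c(L-d)$ (for $K$ large), one obtains that the total variation of $y$ along $\g$ is $\gtrsim L-d\gtrsim Kd$. Combining this with the single-sub-arc bound forces $\g$ to split into $N\gtrsim K$ maximal $y$-monotone sub-arcs $\g_1,\ldots,\g_N$; an extra pigeon-hole discarding those on which $\T$ reaches only small amplitude (these contribute negligibly to $L-d$) still leaves $\gtrsim K$ sub-arcs on which $|\T|$ reaches a magnitude bounded below, with $\T=0$ at both endpoints (the local extrema of $y$).

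The third step converts this alternation into Dirichlet energy, mimicking the circle argument of Lemma \ref{reversedtriangleineq}. At each extremum $z^\ast$ shared by two adjacent surviving sub-arcs of opposite $y$-monotonicity, two branches of $\G$ leave $z^\ast$ on which $\T=\arcsin y'$ has opposite signs with magnitude $\gtrsim 1$ on a non-trivial portion. For a geometric range of radii $r$ for which the circle $\{|z-z^\ast|=r\}$ crosses both branches and contains inside $\Om_\G$ an arc connecting them, a Cauchy-Schwarz / Poincar\'e estimate on that arc gives
\begin{equation*}
\int_{\{|z-z^\ast|=r\}\cap \Om_\G}|\na\T|^2\,d\mathcal{H}^1 \gtrsim \frac{1}{r},
\end{equation*}
exactly as in \eqref{gradientoncircle}. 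Integrating over a suitable dyadic range of $r$'s yields a $\log$-type contribution per extremum, and after careful scale-selection one sums the $\gtrsim K$ contributions to obtain $\int_{\Om_\G}|\na\T|^2\gtrsim \log K$. Choosing $K$ so that $\log K>2M$ produces the contradiction and yields the chord-arc bound with $C_3=C_3(M)$.

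\textbf{Main obstacle.} The principal difficulty is the summation in the last step without double-counting. When $\sim K$ extrema are packed into $B(z_1,C_0d)$, the annular integration regions $\{r_1\le |z-z_i^\ast|\le r_2\}$ for different $z_i^\ast$ will generally overlap, so naive summation counts the same energy mass many times. The remedy will be a careful selection argument: a Vitali-type covering on the set of extrema combined with a dyadic partition of radii, so that each chosen extremum is assigned an essentially disjoint scale range and therefore contributes its own independent $\log$ piece. A secondary technical point is justifying that sub-arcs with small $\sup|\T|$ may be safely discarded, which requires comparing the contribution to $L-d$ of small-amplitude versus large-amplitude oscillations; this is essentially bookkeeping using $1-\cos\T\approx\T^2/2$ but must be carried out uniformly in the parameters.
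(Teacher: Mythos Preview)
Your contradiction setup and the first two structural steps (confining $\g$ to a ball of size $O(d)$, then counting $\gtrsim K$ maximal $y$-monotone sub-arcs) are correct and match the paper's starting point. But from there the paper takes a genuinely different and much cleaner route that completely bypasses the obstacle you isolate at the end.

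Instead of centering circles at the extrema and summing, the paper slices by \emph{horizontal lines}. After scaling to $|z_1-z_2|=1$ and extending $\T$ to the rectangle $Q\supset\g$ via Lemma~\ref{extension}, for a.e.\ height $s$ the level set $T_s=\{t:y(t)=s\}$ consists of finitely many parameters $t_1<\cdots<t_n$, and at consecutive crossings the signs of $y'(t_i)$ alternate. Since $\T=\arcsin y'$ on $\G$, this forces $|\T(x(t_i),s)-\T(x(t_{i+1}),s)|\ge |y'(t_i)|+|y'(t_{i+1})|$, and a single Cauchy--Schwarz on the segment gives
\[
\int_0^a|\partial_x\T(x,s)|^2\,dx \ \ge\ \frac{\big(\sum_i|y'(t_i)|\big)^2}{x(t_n)-x(t_1)}\ \ge\ \sum_{t\in T_s}|y'(t)|^2.
\]
Now a single application of coarea in $s$ converts the right side into $\int_B|y'(t)|^3\,dt$, and H\"older against $\int_B|y'|^2\ge c-4$ yields $C_4\ge (c-4)^{3/2}/c^{1/2}$, which blows up with $c$. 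The horizontal-line decomposition is the key trick: distinct heights $s$ give disjoint slices of $Q$, so there is no double counting to repair, and all $\gtrsim K$ oscillations on a given line are captured at once by one Cauchy--Schwarz.

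By contrast, your circle-based summation genuinely needs the disjointness machinery you sketch, and that machinery is not obviously available: the $\sim K$ extrema can cluster on a set of diameter $o(d)$, so annuli at comparable scales around different extrema overlap heavily, and a Vitali/dyadic selection will typically leave you with only $O(\log K)$ disjoint scale bands total --- not $\log K$ \emph{per} extremum. Whether this still suffices is unclear from what you wrote. A minor point: your inequality $\int_\g|\sin\T|\,ds\ge\sqrt{L(L-\Delta x)}$ is the wrong way around (Cauchy--Schwarz gives $\le$); however, the simpler pointwise bound $1-\cos\T\le|\sin\T|$ for $|\T|\le\pi/2$ already yields $\int_\g|\sin\T|\,ds\ge L-\Delta x$, so your extrema count survives.
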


\begin{proof}
First assume $C_3$ is a very large number that will be determined later. We prove by contradiction. Suppose $\mathcal{H}_1(\g)=c|z_1-z_2|$ for some $c>C_3$, our goal is to show that the Dirichelt energy ``generated" by this part of boundary will be very large, which contradicts to the uniform bounds of Dirichlet energy ($E(\G)\leq M$). The basic idea can be roughly stated as following: if the length of curve is way too long compared with the chord length, then there will be lots of fluctuations of the curve, which will lead to large energy. The co-area formula will be used repeatedly in the proof.

Since 2D Dirichlet energy is scaling invariant, we simply let $|z_1-z_2|=1$, and reparametrize the curve $\g$ in the following way:
\begin{align*}
\g=\{&\left(x(t),y(t)\right):\, x(t),y(t)\in \mathrm{AC}[0,c];\, x'(t)\geq 0;\, z_1=(x(0),y(0))=(0,0),\\
& z_2=(x(c),y(c))=(a,\pm\sqrt{1-a^2})\,\text{ for some }a\in(0,1]; |x'(t)|^2+|y'(t)|^2=1\,\mathrm{a.e.} \}.
\end{align*}
According to Lemma \ref{reversedtriangleineq}, we have $|y(t)|\leq C$ for $t\in[0,c]$, where $C$ is the constant in \eqref{reversetri}. So there exists $Y_1\leq 0,Y_2\geq 0$ such that $|Y_i|\leq C$ for $i=1,2$ and $\min y(t)=Y_1, \, \max y(t)=Y_2$. Furthermore we have
\beq\label{rectangle}
\Gamma\subset \{0\leq x\leq a, Y_1\leq y\leq Y_2\}=:Q.
\eeq
We have the following upper bound for the energy
\beqo
\int_{\Omega_\g}|\na \T|^2\,dxdy\leq M, \text{ where }\Omega_\g:=Q\cap\{(x,y)\text{ below }\g\}.
\eeqo
Note that by Lemma \ref{extension},  we can extend the domain of $\Theta$ to all of
$Q$ such that
\beqo
\int_{Q}|\na \T|^2\,dxdy\leq C_2M=:C_4.
\eeqo
Here this constant $C_4$ only depends on $M$. Also we make the following definitions:
\begin{align*}
&T_s:=\{t\in[0,c]: y(t)=s\},\qquad \forall s\in[Y_1,Y_2]\\
&U:=\{s\in[Y_1,Y_2]: |T_s| \text{ is infinite} \},\quad W:=\{t\in[0,c]: y(t)\in U\}.\\
&S:=\{s\in[Y_1,Y_2]: |T_s|=1\},\qquad A:=\{t\in[0,c]: y(t)\in S\}.\\
&V:=[Y_1,Y_2]\backslash (S\cup U),\qquad B:=[0,c]\backslash (A\cup W).
\end{align*}
Here $|\cdot|$ denotes the cardinality of a set.

\par
Now  we have set up all the assumptions and are ready to derive a contradiction. First we point out several elementary observations:
\begin{itemize}
\item[(i)] The following estimate holds:
\beq\label{deriofy}
\int_0^c |y'(t)|^2\,dt=c-\int_0^c|x'(t)|^2\,dt\geq c-\int_0^c |x'(t)|dt\geq c-1.
\eeq
\item[(ii)] If $Y_2>1$, then for any $s\in[1,Y_2)$, we have $|T_s|\geq 2$ by mean value theorem for continuous function. Similarly, if $Y_1< -1$, for any $s\in(Y_1,-1]$, it holds that $|T_s|\geq 2$. In other words, we have
    \beqo
    S\subset [-1,1]\cap \{Y_1,Y_2\}.
    \eeqo
\item[(iii)] We can estimate the measure of $A$ by
\begin{align}
\label{measureofA}m(A)&=\int_A \sqrt{|x'(t)|^2+|y'(t)|^2}\,dt\\
\nonumber &\leq \int_A |x'(t)|\,dt+\int_A |y'(t)|\,dt\\
\nonumber &\leq 1+\int_S \mathcal{H}^0(T_s)\,ds \text{ (by coarea formula)}\\
\nonumber &=1+|S|\leq 3.
\end{align}
\item[(iv)] By co-area formula one can easily check that
\beqo
m(U)=0,\quad \int_{W} |y'(t)|\,dt=0
\eeqo
\end{itemize}
\par
For any $s\in V$, by definition we have $2\leq T_s< \infty$, we want to derive a lower bound for the following quantity:
\beqo
E(s):=\int_0^a \left|d_x\Theta(x,s)\right|^2\,dx.
\eeqo
Assume $T_s=\{t_1,\dots,t_n\}$ for some $n\geq 2$, and by definition we have
\beqo
\sin(\Theta(x(t_i),s))=y'(t_i), \text{ for } i=1,\dots,n.
\eeqo
An easy observation is that for each two adjacent points, say $t_i$ and $t_{i+1}$,
\beqo
y'(t_i)\cdot y'(t_{i+1})\leq 0.
\eeqo
We deduce that
\beqo
|\Theta(x(t_i),s)-\Theta(x(t_{i+1}),s)|\geq |\sin(\T(x(t_i),s))-\sin(\T(x(t_{i+1}),s)|=|y'(t_{i})|+|y'(t_{i+1})|.
\eeqo
Then we estimate $E(s)$ as following
\begin{align}\label{energyline}
E(s)&=\int_0^1 |d_x(\T(x,s))|^2\,dx \\
\nonumber &\geq \sum\limits_{i=1}^{n-1} \int_{x(t_{i})}^{x(t_{i+1})} |d_x(\T(x,s))|^2\,dx\\
\nonumber &\geq \sum\limits_{i=1}^{n-1} \int_{x(t_i)}^{x(t_{i+1})} \left|\f{\Theta(x(t_{i+1}),s)-\T(x(t_i),s)}{x(t_{i+1})-x(t_{i})}\right|^2\,dx\\
\nonumber &\geq \sum\limits_{i=1}^{n-1} \f{(|y'(t_i)|+|y'(t_{i+1})|)^2}{x(t_{i+1})-x(t_i)}\\
\nonumber \text{(Cauchy-Schwarz) }&\geq \f{(\sum\limits_{i=1}^n |y'(t_i)|)^2}{x(t_n)-x(t_1)}\\
\nonumber &\geq \sum\limits_{t\in T_s} |y'(t)|^2.
\end{align}

Using coarea formula, we get
\beq\label{coarea}
\int_{B}|y'(t)|^3\,dt=\int_{U}\left(\sum\limits_{t\in T_s} |y'(t)|^2\right)\,ds
\eeq
Then by estimating the Dirichlet energy inside $Q$ using \eqref{energyline} and \eqref{coarea}, we have that
\begin{align}
\label{energyest}C_4&\geq \int_Q |\na \T(x,y)|^2\,dxdy\\
\nonumber &\geq \int_V E(s)\,ds\\
\nonumber &\geq \int_{V}\left(\sum\limits_{t\in T_s} |y'(t)|^2\right)\,ds\\
\nonumber &=\int_B |y'(t)|^3\,dt
\end{align}
H\"{o}lder inequality further implies that
\beq\label{holder}
\left(\int_B |y'(t)|^3\,dt\right)\geq \left( \int_B|y'(t)|^2\,dt \right)^{3/2}\cdot m(B)^{-1/2}.
\eeq
By \eqref{deriofy} and \eqref{measureofA}, we have
\beq\label{deriyonB}
\int_B |y'(t)|^2\,dt\geq c-1-\int_A |y'(t)|^2\,dt-\int_W|y'(t)|^2\,dt\geq c-4.
\eeq
As a result, combining \eqref{energyest}, \eqref{holder} and \eqref{deriyonB} leads to
\beqo
C_4\geq \f{(c-4)^{3/2}}{c^{1/2}},
\eeqo
which yields a contradiction if we choose the constant $C_3$ to be large enough at first (recall that $c$ is a real number larger than $C_3$). Now that since $C_4$ only depends on $M$, $C_3$ also only depends on $M$. This completes our proof of Proposition \ref{chordarc}.
\end{proof}

Actually, we can examine the chord-arc property of $\G$ more closely and prove that it is indeed a vanishing chord-arc (also called ``approximately smooth'') curve, which is the following lemma.

\begin{proposition}\label{vanishingchordarc}
Let $\G\in \mathcal{G}_1$ and $E(\G)\leq M$. For any $\e>0$, there exists a $r=r(\e,\G)$ such that for any two points $z_1,z_2\in\G$ that satisfy $|z_1-z_2|\leq r$, we have
\beqo
l(\g)\leq (1+\e)|z_1-z_2|,
\eeqo
where $\g=\G_{z_1z_2}$. That is to say, $\G$ is a vanishing chord-arc (approximately smooth) curve.
\end{proposition}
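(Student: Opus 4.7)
I argue by contradiction via a blow-up (compactness) argument. Suppose the conclusion fails: there exist $\e_0>0$ and sequences of points $z_1^n,z_2^n\in\G$ with $a_n:=|z_1^n-z_2^n|\to 0$ and $c_n:=l(\g_n)/a_n\ge 1+\e_0$, where $\g_n:=\G_{z_1^nz_2^n}$. The idea is that, after rescaling so the chord has unit length, the local Dirichlet energy of $\T$ around $z_1^n$ must vanish, and this will force the limiting arc to be a straight segment of unit length, contradicting $c_n\ge 1+\e_0$.

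Translate so that $z_1^n$ becomes the origin and dilate by $1/a_n$, producing rescaled objects $\tilde\G_n$, $\tilde\Omega_n$, $\tilde\T_n$. Since the two dimensional Dirichlet integral is scale invariant, $\tilde\T_n$ is harmonic in $\tilde\Omega_n$ with the same tangential boundary data and $\|\na\tilde\T_n\|_{L^2(\tilde\Omega_n)}^2\le M$. By Proposition \ref{chordarc} the rescaled arc $\tilde\g_n$ has length $c_n\le C_3$ and sits inside $\overline{B(0,C_3)}$. Since $\T$ extends to an $H^1(\mathbb{R}^2)$ function by Lemma \ref{extension}, $\int_{A}|\na\T|^2\to 0$ whenever $|A|\to 0$; applied to $A=B(z_1^n,2C_3a_n)$ this gives
\begin{equation*}
\int_{\tilde\Omega_n\cap B(0,2C_3)}|\na\tilde\T_n|^2\longrightarrow 0\quad\text{as }n\to\infty.
\end{equation*}

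The domain $\tilde\Omega_n\cap B(0,2C_3)$ is bounded by a piece of the uniformly chord-arc curve $\tilde\G_n$, a portion of $\pa B(0,2C_3)$, and possibly a segment of the $x$-axis (on which $\tilde\T_n\equiv 0$). The uniform chord-arc property furnishes a uniform Poincar\'e constant and a uniform $H^1\to L^2$ trace estimate onto $\tilde\g_n$, so there exist constants $\bar\theta_n\in[-\pi/2,\pi/2]$ with
\begin{equation*}
\int_0^{c_n}\bigl|\arcsin\tilde y_n'(t)-\bar\theta_n\bigr|^2\,dt\longrightarrow 0,
\end{equation*}
where I used \eqref{bdycondition} to identify the trace on $\tilde\g_n$ with $\arcsin\tilde y_n'$. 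Because $\sin$ is $1$-Lipschitz and $\sin\circ\arcsin=\mathrm{id}$ on $[-1,1]$, this gives $\tilde y_n'\to\sin\bar\theta_n$ in $L^2([0,c_n])$; combining this with $\tilde x_n'=\sqrt{1-(\tilde y_n')^2}\ge 0$ and the elementary bound $|\sqrt{a}-\sqrt{b}|\le\sqrt{|a-b|}$ also yields $\tilde x_n'\to|\cos\bar\theta_n|$ in $L^2$.

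Passing to a subsequence, $c_n\to c_\infty\in[1+\e_0,C_3]$ and $\bar\theta_n\to\bar\theta$. By Arzel\`a-Ascoli applied to the uniformly $1$-Lipschitz parametrizations $(\tilde x_n,\tilde y_n)$ on $[0,c_\infty]$ (extended by the endpoint for $t>c_n$), the curves converge uniformly to a limit $\tilde\g_\infty$; the $L^2$ convergence of the derivatives identifies $\tilde\g_\infty$ as the straight segment from the origin of length $c_\infty$ in direction $(|\cos\bar\theta|,\sin\bar\theta)$. Its endpoint has modulus $c_\infty$, but it also equals $\lim(z_2^n-z_1^n)/a_n$, which lies on the unit circle; therefore $c_\infty=1$, contradicting $c_\infty\ge 1+\e_0$. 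The principal technical obstacle is the uniform Poincar\'e and trace inequalities on the rescaled domains $\tilde\Omega_n\cap B(0,2C_3)$; these rest on the uniform chord-arc constant from Proposition \ref{chordarc} and require particular care when $z_1^n$ approaches an endpoint of $\G$ where the $x$-axis enters the rescaled picture (an adaptation of the quasidisk construction of Lemma \ref{extension} accommodates this case, as the trivial extension $\tilde\T_n\equiv 0$ below the $x$-axis preserves both smallness of energy and the chord-arc geometry).
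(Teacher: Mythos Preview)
Your blow-up/compactness route is genuinely different from the paper's proof. The paper argues quantitatively: having confined $\g$ to a rectangle $Q_{z_1z_2}$ of size $\simeq |z_1-z_2|$ (via Lemma~\ref{reversedtriangleineq}) and extended $\T$ to $H^1(\mathbb{R}^2)$, it shows directly by a co-area computation in rotated coordinates $(g,h)$ that whenever $l(\g)\ge(1+\e)|z_1-z_2|$ one has $\int_{Q_{z_1z_2}}|\nabla\T|^2\ge C(\e,M)>0$, and then invokes absolute continuity of the integral to force $|z_1-z_2|$ large. Your argument runs the same absolute-continuity observation in the opposite direction: the rescaled local energy vanishes, and you extract a straight-line limit via trace and Arzel\`a--Ascoli. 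Both proofs rest on the same two ingredients (the $H^1$ extension and the smallness of $\int_E|\nabla\T|^2$ for $|E|$ small); the paper's version is more explicit (it gives $C(\e)\simeq\e^{3/2}$) and entirely self-contained, while yours is softer and more conceptual.

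The one substantive gap in your write-up is exactly the step you flag: the \emph{uniform} trace inequality $\|u|_{\tilde\g_n}\|_{L^2(\tilde\g_n)}\le C\|u\|_{H^1}$ with $C$ independent of $n$. As written you work on the variable domains $\tilde\Omega_n\cap B(0,2C_3)$ and assert that the uniform chord-arc constant from Proposition~\ref{chordarc} yields both a uniform Poincar\'e constant and a uniform trace bound; this is plausible (chord-arc domains are NTA, hence John, and Ahlfors 1-regularity of $\tilde\g_n$ with constant $C_3$ does give a uniform $H^1\to L^2$ restriction), but it is not proved here and is not a one-line citation. A cleaner way to close the gap, and one that sidesteps the variable-domain issue entirely, is to use the extension you already invoked: work with the rescaled extended function $\tilde\T_n\in H^1(B(0,2C_3))$, apply Poincar\'e on the fixed ball to get $\|\tilde\T_n-\bar\theta_n\|_{H^1(B)}\to 0$, and then trace onto the interior chord-arc curve $\tilde\g_n\subset B(0,C_3)$; the trace on $\tilde\g_n$ agrees with $\arcsin\tilde y_n'$ because the Sobolev extension preserves the boundary value on $\G$. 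The uniform interior trace onto a chord-arc curve with fixed constant is then the only fact you need to justify, and this follows from Ahlfors regularity of $\tilde\g_n$. With that in place, the remainder of your argument (the $L^2$ convergence of $\tilde y_n'$ and $\tilde x_n'$, and the straight-segment limit) is correct.
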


\begin{proof}
The technique here will be very similar to the proof of Proposition \ref{chordarc}. We will only present our main ingredients and omit some computational details. Take any $z_1,z_2\in \G$ such that $|z_1-z_2|=r$. By Lemma \ref{reversedtriangleineq}, $\g=\G_{z_1z_2}$ must be contained in a rectangle domain $Q_{z_1z_2}$ with width $r$ and length $2Cr$ (see \eqref{rectangle} for the existence of such rectangle). Let $\T$ be the angle function determined by $\G$, we again extend $\T$ to $\mathbb{R}^2$ such that $\int_{\mathbb{R}^2}|\na\T|^2\,dx<C_2M$. Then for any $\d>0$, there exists a $\sigma>0$ such that
\beqo
\int_{E} |\na\T|^2\,dx\leq \d, \quad \text{whenever }|E|<\sigma.
\eeqo
Therefore as $r\ri 0$, the Dirichlet energy of $\T$ inside $Q_{z_1z_2}$ will go to zero. The convergence rate doesn't depend on the choice of $z_1,z_2$ but only depends on their distance $r$. As a consequence, in order to prove the lemma, we only need to prove the following statement: for any $\e>0$, there exists a constant $C(\e)>0$ such that
\beq\label{vanish}
\int_{Q_{z_1z_2}}|\na\T|^2\,dx\geq C(\e),\, \text{ whenever }\,l(\g)\geq (1+\e)|z_1-z_2|
\eeq
Now we fix $\e>0$. By scaling invariant property, we assume without loss of generality that $|z_1-z_2|=1$ and $l(\g)= 1+\e$, $z_1=(0,0)$, $z_2=(\cos\al,\sin\al)$ for some $\al\in(0,\f{\pi}{2})$. Note that here $|\al|\neq \f{\pi}{2}$, otherwise $\g$ would be line segment orthogonal to $x-$axis and $l(\g)=1$. We parameterize $\g$ as
\beqo
\g:=\{(x(t),y(t)): (x(0),y(0))=z_1,\, (x(1+\e),y(1+\e))=z_2,\, |x'(t)|^2+|y'(t)|^2=1, \text{ a.e.}\}
\eeqo
Set
\beqo
h(t):=\cos\al \cdot y(t)-\sin\al \cdot x(t),\quad g(t):=\sin\al\cdot y(t)+\cos\al\cdot x(t).
\eeqo
We have
\beq\label{xyh}
\int_0^{1+\e} x'(t)\,dt=\cos\al,\quad \int_0^{1+\e}y'(t)\,dt=\sin\al,\quad h(0)=h(1+\e)=0.
\eeq
Set $h_{max}=\max\limits_{0\leq t\leq 1+\e}h(t)$ and $h_{min}=\min\limits_{0\leq t\leq 1+\e}h(t)$, and define
\begin{align*}
T_s&:=\{t\in [0,1+\e]: h(t)=s\} \text{ for }s\in [h_{min}, h_{max}],\\
B&:=\{t\in [0,1+\e] : 2\leq T_{h(t)}<\infty \}.
\end{align*}
Obviously for any $s\in (h_{min}, h_{max})$ we have $|T_s|\geq 2$. Also we should deduct the subset of $[h_{min}, h_{max}]$ such that $|T_s|$ is infinite (see the definition of set $U,W$ in the proof of Proposition \ref{chordarc}). But from the argument in the proof of Proposition \ref{chordarc} we know it is a measure zero set and won't affect our computation, so we may simply assume $2\leq|T_{s}|<\infty$ for any $s\in (h_{min}, h_{max})$.

We discuss in two cases.
\begin{case}
If $g'(t)=\sin\al\cdot y'(t)+\cos\al \cdot x'(t)\geq 0$ for a.e. $t\in [0,1+\e]$. We calculate in the same way as \eqref{energyline}, \eqref{coarea} and \eqref{energyest} (the only difference is we replace $y(t)$ with $h(t)$) and obtain
\beq\label{energyestcase1}
\int_{Q_{z_1z_2}}|\na \T|^2\,dxdy\geq \int_{Q_{z_1z_2}}|\pa_g \T|^2\,dxdy\geq  C\int_B | h'(t)|^3\,dt
\eeq
Here $C$ is a positive constant only depend on $M$. Also, from \eqref{xyh} and the assumption $g'(t)\geq 0$ we have
\begin{align}\label{gtcase1}
&\int_0^{1+\e} |\sin\al\cdot y'(t)+\cos\al\cdot x'(t)|^2\,dt\\
\nonumber =&\int_0^{1+\e} |\sin\al\cdot y'(t)+\cos\al\cdot x'(t)|\cdot (\sin\al\cdot y'(t)+\cos\al\cdot x'(t)) \,dt\\
\nonumber \leq &\int_0^{1+\e} (\sin\al\cdot y'(t)+\cos\al\cdot x'(t))\,dt=1
\end{align}
and
\beqo
\int_0^{1+\e} |\sin\al\cdot y'(t)+\cos\al\cdot x'(t)|^2+|\cos\al \cdot y'(t)-\sin\al \cdot x'(t)|^2 \,dt=1+\e
\eeqo
The above two inequalities imply
\beqo
\int_0^{1+\e} |h'(t)|^2\,dt\geq \e.
\eeqo
By co-area formula we know that the set where $h(t)= h_{min}$ or $h_{max}$ contributes nothing in  the above integral, so we have
\beq\label{htesticase1}
\int_B |h'(t)|^2\,dt\geq \e.
\eeq

Then we combine \eqref{energyestcase1}, \eqref{htesticase1} and H\"{o}lder inequality to conclude that
\beq\label{hintegral}
\int_{Q_{z_1z_2}}|\na \T|^2\,dxdy\geq C\f{\e^{3/2}}{(1+\e)^{1/2}}.
\eeq
\end{case}

\begin{case}
Assume $g'(t)\geq 0 $ doesn't hold almost everywhere, then we may lose the estimate \eqref{gtcase1}. If we still have $\int_0^{1+\e}|g'(t)|^2\leq 1$, then all the estimates in Case 1 still hold and there is nothing to prove. So we assume
\beq\label{gtcase2}
\int_0^{1+\e} |g'(t)|^2\,dt =1+\d\quad \text{for some } 0<\d\leq \e.
\eeq
Then
\beq\label{htcase2}
\int_B |h'(t)|^2\,dt=\e-\d.
\eeq
Then the same computation leads to
\beq\label{hintegral2}
\int_{Q_{z_1z_2}}\left| \pa_g Q\right|^2 \,dxdy\geq C\f{(\e-\d)^{3/2}}{(1+\e)^{1/2}}.
\eeq
Now we set
\begin{align*}
g_{max}&:=\max\limits_{0\leq t\leq 1+\e}g(t),\quad g_{min}:=\min\limits_{0\leq t\leq 1+\e}g(t),\\
T^2_s&:=\{t\in [0,1+\e]: g(t)=s\} \text{ for }s\in [g_{min}, g_{max}],\\
B_2&:=\{t\in [0,1+\e] : 2\leq |T^2_{g(t)}|<\infty \},\\
A_2&:=\{t\in [0,1+\e]: g'(t)<0\}.
\end{align*}
Again we can ignore the set where $|T^2_{g(t)}|=\infty$ since it may lead to more complicated notations but won't affect any of our estimates. So we assume for every $t\in [0,1+\e]$, we have $|T^2_{g(t)}|<\infty$. Then simple geometry tells us that $A_2\subset B_2$. Also, since $\int_0^{1+\e} g'(t)\,dt=1$ and $\int_0^{1+\e} |g'(t)|\geq 1+\d$, it holds that
\beq\label{gtestcase2}
\int_{B_2}|g'(t)|\,dt\geq \int_{A_2}|g'(t)|\,dt\geq  \f{\d}{2}.
\eeq
\end{case}
Similar techniques in \eqref{energyline}, \eqref{coarea} and \eqref{energyest} imply that
\beq\label{gintegral2}
\int_{Q_{z_1z_2}}\left| \pa_h Q\right|^2 \,dxdy\geq C \int_{B_2} |g'(t)|^3\,dt \geq C \f{(\d/2)^3}{(1+\e)^{2}}. \text{ (H\"{o}lder inequality)}
\eeq
where $C$ is a constant that only depends on $M$.  We can combine this with \eqref{hintegral2} to get
\begin{align*}
&\int_{Q_{z_1z_2}} |\na Q|^2\,dxdy\\
\geq & \int_{Q_{z_1z_2}} |\pa_g Q|^2\,dxdy+\int_{Q_{z_1z_2}} |\pa_h Q|^2\,dxdy\\
\geq & C\left( \f{(\e-\d)^{3/2}}{(1+\e)^{1/2}}+ \f{(\d/2)^3}{(1+\e)^{2}}\right)\\
\geq & C(\e,M)
\end{align*}
This implies \eqref{vanish} and completes our proof of Proposition \ref{vanishingchordarc}.

\end{proof}

\begin{corol}
The normal vector $\nu$ along the curve $\Gamma$ belongs to $\mathrm{VMO}$ (vanishing mean oscillation space), i.e.
\beqo
\lim\limits_{r\ri 0} \left(\f{1}{l(B(x,r)\cap \G)}\int_{B(x,r)\cap \G}|\nu-\nu_{B(x,r)}| \,dl \right)=0 \;\text{ uniformly for }x\in\G,
\eeqo
where
\beqo
 \nu_{B(x,r)}=\f{1}{l(B(x,r)\cap \G)}\int_{B(x,r)\cap \G}\nu \,dl
\eeqo
\end{corol}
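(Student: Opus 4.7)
The plan is to translate the vanishing chord–arc property of Proposition \ref{vanishingchordarc} into a mean oscillation bound on the tangent vector, and then observe that $\nu$ is just a rigid rotation of $\tau=(x'(t),y'(t))$ so the two have identical VMO moduli.

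First I would show that for $r$ small, $B(x,r)\cap\Gamma$ is a single arc $\gamma_r$ containing $x$. Suppose $z\in B(x,r)\cap\Gamma$ lies on a different connected component than the one through $x$. Then any sub-arc of $\Gamma$ joining $x$ to $z$ must exit and re-enter $B(x,r)$, so its length is at least $2r$ by the triangle inequality. But by Proposition \ref{vanishingchordarc}, for any $\e<1/2$ there exists $r_0=r_0(\e,\Gamma)$ such that whenever $|x-z|<2r<r_0$ the arc between $x$ and $z$ has length at most $(1+\e)|x-z|<(1+\e)r$, giving a contradiction. Hence $B(x,r)\cap\Gamma=\gamma_r$ is a single arc, with endpoints $w_1,w_2\in\partial B(x,r)$, and in particular
\[
2r\le l(\gamma_r)\le (1+\e)|w_1-w_2|\le 2(1+\e)r,
\]
where the lower bound comes from the fact that $\gamma_r$ contains sub-arcs from $x$ to each of $w_1,w_2$, each of length at least $r$.

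Next I would estimate $\tau$ against the constant chord direction $v:=(w_2-w_1)/|w_2-w_1|$. Since $\gamma_r$ is arc-length parametrized and $\int_{\gamma_r}\tau\,ds=w_2-w_1$,
\[
\int_{\gamma_r}(1-\tau\cdot v)\,ds=l(\gamma_r)-|w_2-w_1|\le \e|w_2-w_1|\le 2\e r.
\]
Using $|\tau-v|^{2}=2(1-\tau\cdot v)$ and Cauchy–Schwarz,
\[
\int_{\gamma_r}|\tau-v|\,ds\le \bigl(l(\gamma_r)\bigr)^{1/2}\Bigl(\int_{\gamma_r}|\tau-v|^{2}\,ds\Bigr)^{1/2}\le\bigl(2(1+\e)r\bigr)^{1/2}(4\e r)^{1/2}\le 4\sqrt{\e}\,r.
\]
Dividing by $l(\gamma_r)\ge 2r$ yields $l(\gamma_r)^{-1}\int_{\gamma_r}|\tau-v|\,ds\le 2\sqrt{\e}$. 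Since replacing $v$ by the mean $\tau_{\gamma_r}=l(\gamma_r)^{-1}\int_{\gamma_r}\tau\,ds$ only costs a factor $2$ (standard BMO argument), we conclude
\[
\frac{1}{l(\gamma_r)}\int_{\gamma_r}|\tau-\tau_{\gamma_r}|\,ds\le 4\sqrt{\e}.
\]
Because $\nu$ is obtained from $\tau$ by a fixed $90^{\circ}$ rotation, $|\nu-\nu_{\gamma_r}|=|\tau-\tau_{\gamma_r}|$, so the same bound holds for $\nu$. The constant $r_0$ depends only on $\e$ (and on $\Gamma$ through the modulus in Proposition \ref{vanishingchordarc}), so the limit
\[
\lim_{r\to 0}\frac{1}{l(B(x,r)\cap\Gamma)}\int_{B(x,r)\cap\Gamma}|\nu-\nu_{B(x,r)}|\,dl=0
\]
holds uniformly in $x\in\Gamma$, which is precisely the VMO condition.

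The only subtle point is the first step, verifying that for $r$ small the intersection $B(x,r)\cap\Gamma$ consists of a single arc so that the chord–arc comparison can be applied directly; everything else is a linear-algebra identity plus Cauchy–Schwarz. I do not expect any difficulty at the two cusp points $(\pm a,0)$ for this particular statement, since the argument is local and uses only the intrinsic chord–arc geometry of $\Gamma$, which holds throughout.
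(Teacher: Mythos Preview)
Your approach differs from the paper's, which simply invokes the result of Kenig--Toro \cite{kt} (that the unit normal on a vanishing-Reifenberg-flat, vanishing-chord-arc domain lies in $\mathrm{VMO}$) as a black box. Your argument is elementary and self-contained, extracting the $\mathrm{VMO}$ modulus directly from the vanishing chord-arc estimate via the identity $|\tau-v|^2=2(1-\tau\cdot v)$ and Cauchy--Schwarz; this has the virtue of making the dependence of the $\mathrm{VMO}$ modulus on $\e$ explicit (you obtain $O(\sqrt{\e})$), whereas the citation gives no quantitative information.

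There is, however, a gap in your first step. The claim that a sub-arc of $\Gamma$ joining $x$ to a point $z$ lying in a different component of $B(x,r)\cap\Gamma$ has length at least $2r$ does not follow from the triangle inequality as you state: if the arc exits $B(x,r)$ at $p_1$ and last re-enters at $p_2$ before reaching $z$, one obtains only
\[
l(\Gamma_{xz})\ge |x-p_1|+|p_2-z|\ge r+(r-|x-z|)=2r-|x-z|,
\]
which is compatible with the upper bound $(1+\e)|x-z|$ whenever $|x-z|$ is close to $r$. So $B(x,r)\cap\Gamma$ need not be a single arc. The fix is immediate and does not disturb the rest of your proof: since every $z\in B(x,r)\cap\Gamma$ satisfies $l(\Gamma_{xz})\le(1+\e)r$ by Proposition~\ref{vanishingchordarc}, the set $B(x,r)\cap\Gamma$ is contained in the single arc $\tilde\gamma=\{z(t):|t-t_0|\le(1+\e)r\}$ of length $2(1+\e)r$, whose endpoints $w_1,w_2$ satisfy $|w_1-w_2|\ge 2r$ by the same proposition. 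Run your Cauchy--Schwarz step on $\tilde\gamma$ to get $\int_{\tilde\gamma}|\tau-v|\,ds\le C\sqrt{\e}\,r$, restrict the integral to $B(x,r)\cap\Gamma\subset\tilde\gamma$, and divide by $l(B(x,r)\cap\Gamma)\ge 2r$. The remainder of your argument is correct as written.
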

\begin{proof}
We already have that $\G$ is a vanishing chord-arc curve, also we can easily check that $\G$ is vanishing Reifenberg flat. Therefore we can direct apply results of Kenig $\&$ Toro \cite{kt} to conclude that $\nu\in\mathrm{VMO}$.
\end{proof}


\subsection{Weil-Petersson curve, $H^{3/2}$ characterization, and $\beta$-number}\label{weil-p}
Recall that a quasicircle is the image of the unit circle $\mathbb{T}$ under a quasiconformal map $f$ of $\BR^2$, e.g. a homeomorphism of the plane that is conformal outside the unit disk $\mathbb{D}$, whose dilatation $\mu$ satisfies $\|\mu\|_{L^\infty(\mathbb{D})}<1$. The collection of planar quasicircles is called the universal Teichm\"{u}ller space $T(1)$ and the metric is defined in terms of $\|\mu\|_\infty$. Takhtajan and Teo \cite{tt} defined a Weil-Petersson metric on universal Teichm\"{u}ler space $T(1)$ that makes it into a Hilbert manifold. A Weil-Petersson curve is the image of $\mathbb{T}$ under a quasiconformal map $f$ on the plane, and satisfies $|\mu|\in L^2(d\,A_\rho)$, where $A_\rho$ is hyperbolic area on $\mathbb{D}$. Another characterization for the Weil-Petersson curve is in terms of conformal mapping $f: \mathbb{D}\ri \Om$, where $\Om$ is the domain bounded by $\G$. $\G$ is a Weil-Petersson curve if and only if $(\log{f'})'\in L^2(\mathbb{D})$. In our problem, the curves in $\mathcal{G}_v$ resemble a lot to the Weil-Petersson curves in the following way. For a Weil-Petersson curve $\G$, let $f:\mathbb{D}\ri \Om$ be a conformal mapping. We focus on the boundary map $f:\mathbb{T}\ri \G$. Since $\log{f'}$ is in the Dirichlet space, we have that $\mathrm{arg}f'(z)$, as a function on $\mathbb{T}$, has a finite Dirichlet energy extension inside $\mathbb{D}$. One can check that for any $a\in\mathbb{T}$, it holds $\mathrm{arg}\,f'(a)=\mathrm{arg}\, \nu_\G(f(a))-2\pi a$, where $\nu_\G(f(a))$ is the outer normal vector of $\G$ at the point $f(a)$. Thus $\mathrm{arg}\,\nu_\G(b)-2\pi f^{-1}(b)$, as a function of $b\in \G$, has a finite Dirichlet energy inside $\Om$. Note that in our definition of $\mathcal{G}_v$, we require the $\theta=\mathrm{arg}\,\nu_\G-\f{\pi}{2}$ on the curve $\G$, and it has a finite energy extension inside $\Om_\G$. Such characterization is very similar to the Weil-Petersson curve. The difference is that in our case $\G$ is not a closed curve and the domain $\Om_\G$ is not a quasidisk.

In a recent work \cite{bishop}, Christopher Bishop gives 26 equivalent characterizations of the Weil-Petersson class. In particular, he shows that a curve $\G$ is Weil-Petersson if and only if it has arclength parameterization in $H^{3/2}(\mathbb{T})$, has finite M\"{o}bius energy or can be well approximated by polygons in some precise sense. Another equivalent characterization is that Weil-Peterssson curve has local curvature that is square integrable over all locations and scales, where local curvatures are measured using various quantities such as Peter Jone's $\beta$-numbers, conformal welding and Menger curvature. We will show that some of these function theoretic and geometric characterizations can be generalized to our curve $\G\in\mathcal{G}_v$, which greatly deepen our understanding of the class $\mathcal{G}_v$. The proof will follow Bishop's paper \cite{bishop} closely, with some necessary modifications.

Given $\G\in\mathcal{G}_v$, we denote the length of $\G$ by $l$. Let $z(t)=(x(t),y(t)):[0,l]\ri \G$ be the arc-length parameterization of $\G$, i.e. $\sqrt{x'(t)^2+y'(t)^2}=1$. Then $\G$ has the following properties.

\begin{proposition}\label{weil-peterson}
\begin{enumerate}
  \item The arc-length parameterization $z(t):[0,l]\ri \G$ is in the Sobolev space $H^{3/2}([0,l])$.
  \item Let $\nu$ be the normal vector, it holds that
  \beqo
  \int_\G\int_\G \left(\f{|\nu(z)-\nu(w)|}{|z-w|} \right)^2|dz|\,|dw|<\infty.
  \eeqo
\end{enumerate}
\end{proposition}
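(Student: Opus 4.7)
The plan is to reduce both statements to the single assertion that the trace $\Theta|_\G$, viewed as a function of arc length $t\in[0,l]$, lies in the fractional Sobolev space $H^{1/2}([0,l])$. Once this is known, observe that by the boundary condition \eqref{bdycondition} together with the monotonicity $\dot x(t)\ge 0$ from (iv), one has on $\G$
\[
\sin\Theta(z(t)) = y'(t), \qquad \cos\Theta(z(t)) = x'(t),
\]
so that $z'(t)=(\cos\Theta(z(t)),\sin\Theta(z(t)))$. Since $\sin$ and $\cos$ are globally Lipschitz, composition preserves $H^{1/2}$ and yields $z'\in H^{1/2}([0,l])$, i.e.\ $z\in H^{3/2}([0,l])$; this is assertion (1). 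For assertion (2), the outer normal $\nu$ is simply $z'$ rotated by $\pi/2$, so $\nu\in H^{1/2}(\G)$ as well. The double integral in (2) is exactly the Gagliardo $H^{1/2}$-seminorm of $\nu$ on $\G$ with arc-length measure, and since $\G$ is chord-arc by Proposition \ref{chordarc}, $|z(t)-z(s)|$ is comparable to $|t-s|$, so this seminorm is comparable to the seminorm of $\nu\circ z$ on $[0,l]$; hence (2) follows as well.

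To obtain $\Theta\circ z\in H^{1/2}([0,l])$, first use Lemma \ref{extension} to extend $\Theta$ to a function $\widetilde\Theta\in H^1(\mathbb{R}^2)$; note this is consistent with setting $\widetilde\Theta\equiv 0$ on the auxiliary rectangle $D_1$, since $\Theta$ vanishes on the $x$-axis portion of $\pa\Om_\G$. Proposition \ref{chordarc} shows $\G$ is a chord-arc curve and hence Ahlfors $1$-regular: $\mathcal H^1(\G\cap B(x,r))\sim r$ uniformly in $x\in\G$ and small $r$. By the Jonsson--Wallin trace theorem for $1$-sets in $\mathbb{R}^2$ (equivalently, the classical trace theory on chord-arc curves), the restriction operator
\[
H^1(\mathbb{R}^2)\;\longrightarrow\;B^{1/2}_{2,2}(\G)\;=\;H^{1/2}(\G)
\]
is bounded. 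Applying it to $\widetilde\Theta$ yields $\Theta|_\G\in H^{1/2}(\G)$, and by the chord-arc comparability once more, the Gagliardo seminorms on $\G$ and on $[0,l]$ are equivalent, giving $\Theta\circ z\in H^{1/2}([0,l])$.

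The main obstacle I anticipate is that $\pa\Om_\G$ is \emph{not} globally chord-arc: near the cusps $(\pm a,0)$, the two-point condition fails on the full boundary. The role of Lemma \ref{extension} is to bypass this by embedding $\Om_\G$ into the quasidisk $D_\G$, in which the cusps are replaced by ordinary $90^\circ$ corners; crucially, one only needs the $H^1(\mathbb{R}^2)$-extension on $D_\G$, not any trace theorem on $\pa D_\G$. The trace theorem itself is applied only to $\G$, which \emph{is} chord-arc globally (including the cusp endpoints) by Proposition \ref{chordarc}. The remaining care is in verifying the trace theorem in the Besov form above for an Ahlfors $1$-regular set in $\mathbb{R}^2$ with arc-length measure, and in using the vanishing chord-arc property (Proposition \ref{vanishingchordarc}) if necessary to refine the comparability $|z(t)-z(s)|\sim |t-s|$ uniformly. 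This closes the circle and gives both conclusions of the proposition, establishing the Weil--Petersson--type regularity following Bishop's characterizations \cite{bishop}.
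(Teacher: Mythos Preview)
Your argument is correct, and for part (2) it coincides with the paper's. For part (1), however, you take a genuinely different route. The paper works conformally: it passes to the quasidisk $D_\G$, takes a Riemann map $f:\mathbb{D}\to D_\G$, and observes that the pullback $\phi\circ w^{-1}\circ f$ of the normal-direction function lies in $H^{1/2}(\mathbb{T})$ because $\Theta$ has finite Dirichlet energy (this is the Douglas characterization of boundary data of finite-energy harmonic extensions). It then invokes the Beurling--Ahlfors result that $H^{1/2}(\mathbb{T})$ is invariant under composition with quasisymmetric homeomorphisms, applied to the welding $f^{-1}\circ w$, to conclude that $\phi$ itself (hence $z'$ in arc-length) is in $H^{1/2}$. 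This follows Bishop's Lemma 8.1 closely.

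Your approach is purely real-variable: extend $\Theta$ to $H^1(\mathbb{R}^2)$ via Lemma~\ref{extension}, then apply the Jonsson--Wallin trace theorem directly to the Ahlfors $1$-regular curve $\G$ (chord-arc by Proposition~\ref{chordarc}), yielding $\Theta|_\G\in B^{1/2}_{2,2}(\G)$, and finally convert to $H^{1/2}([0,l])$ via the chord-arc comparability $|z(t)-z(s)|\simeq|t-s|$. This sidesteps conformal maps and the quasisymmetric invariance lemma entirely, at the cost of invoking the (slightly heavier) trace machinery for $d$-sets. The paper's route stays closer to the Weil--Petersson literature and makes the link to Bishop's characterizations explicit; yours is more self-contained from a harmonic-analysis standpoint and arguably more direct. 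Either way, the only point requiring a word of justification is that the Jonsson--Wallin trace of the extension $\widetilde\Theta$ on $\G$ agrees $\mathcal{H}^1$-a.e.\ with the prescribed boundary data $\arcsin y'(t)$; this follows since $\G\subset\pa D_\G$ and the interior trace from the quasidisk coincides with the two-sided trace on an Ahlfors regular subset of the boundary.
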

\begin{proof}
(1) As in the proof of Lemma \ref{extension}, we construct a quasidisk $D_\G$ which is the combination of $\Om_\G$ and a rectangle with length $2a$ and width $a$
\beqo
D_\G=\Om_\G\cup \{(x,y): -a<x<a,\;-a<y\leq 0\}.
\eeqo
The length of $\pa D_\G$ is $l+4a$. We denote the arc-length between $z\in\G$ and $(-a,0)$ as $l(z)$. Define the function $\phi:\mathbb{T}\ri\mathbb{T}$ as
\beq
\phi(\theta)=\begin{cases}
\mathrm{arg}\;\nu(z((l+4a)\theta)), & 0\leq \theta\leq \f{l}{l+4a},\\
\f{\pi}{2},& \f{l}{l+4a}<\theta<1.
\end{cases}
\eeq
One can easily check that in order to show $z(t)\in H^{3/2}([0,l])$, it suffices to show $\phi\in H^{1/2}(\mathbb{T})$. We also define an orientation preserving arclength parameterization $w:\mathbb{T}\ri \pa D_\G$, such that $|w'|=l+4a$, $w(0)=(-a,0)$ and $w(\theta)=z((l+4a)\theta)$ for $\theta\in[0,\f{l}{l+4a}]$. Since $D_\G$ is a quasidisk, we can find a map $f$ that is conformal in $D_\G$ and can be extended to a quasi-conformal mapping in the entire plane. Then on the boundary, $f$ maps $\mathbb{T}$ to the quasicircle $\pa D_\G$.

Let $\phi_f:=\phi\circ w^{-1}\circ f$. By the definition of $\mathcal{G}_v$, one has $\phi_f\in H^{1/2}(\mathbb{T})$. The rest of the proof is exactly the same as that of \cite[Lemma 8.1]{bishop}. The idea is to show $f^{-1}\circ w$ is a quasisymmetric map by definition, and then use the arguments by Beurling and Ahlfors \cite{ba} that $H^{1/2}$ is invariant under composition with a quasisymmetric homeomorphism of $\mathbb{T}$.

\noindent (2) By the $H^{3/2}$ characterization, we know $\int_0^l\int_0^l \left| \f{z'(t)-z'(s)}{t-s} \right|^2\,dtds<\infty$. Since $\G$ is chord-arc, $\f{|z(t)-z(s)|}{|s-t|}\in [\frac{1}{C},1]$ for some constant $C$. We have
\begin{align*}
\int_\G\int_\G \left( \f{|\nu(z)-\nu(w)|}{|z-w|} \right)^2\,|dz|\,|dw|&=\int_0^l\int_0^l \left( \f{|z'(t)-z'(s)|}{|z(t)-z(s)|} \right)^2\,dsdt\\
&\simeq \int_0^l\int_0^l \left| \f{z'(t)-z'(s)}{t-s} \right|^2\,dsdt<\infty.
\end{align*}
\end{proof}

\begin{rmk}
A direct consequence of the $H^{3/2}$ characterization is $\G$ has finite M\"{o}bius energy, i.e.
\beq\label{mobius}
M\ddot{o}b(\G)=\int_{\G}\int_\G\left( \f{1}{|z-w|^2}-\f{1}{l(z,w)^2} \right)\,dz\,dw<\infty.
\eeq
Here $l(z,w)$ is the length of $\G$ between $z$ and $w$.
For the proof, one can refer to the proof of \cite[Lemma 9.1]{bishop}. The brief idea is to show that $M\ddot{o}b\,(\G)\simeq \int_\G\int_\G \f{\int_{\gamma_{z,w}}\int_{\gamma_{z,w}}|\nu(x)-\nu(y)|^2|dx|\,|dy|}{|z-w|^4}|dz|\,|dw|$ and then change the order of integration. Furthermore, since $\G$ is chord-arc, it holds that
\beqo
\f{1}{|z-w|^2}-\f{1}{l(z,w)^2}=\f{(l(z,w)+|z-w|)(l(z,w)-|z-w|)}{|z-w|^2l(z,w)^2}\simeq \f{l(z,w)-|z-w|}{|z-w|^3}.
\eeqo
Then \eqref{mobius} implies that $\int_\G\int_\G\f{l(z,w)-|z-w|}{|z-w|^3}|dz|\,|dw|<\infty$.
\end{rmk}

\begin{rmk}
Other characterizations of Weil-Petersson curve in \cite{bishop} include approximation by polygons in a precise sense and the square integrability of $\beta$-numbers. The arguments also work for curves in $\mathcal{G}_v$ and we state them here without proof. Again we consider the arc-length parameterization $z(t): [0,l]\ri \G$. For each $n$, let $z_j^n=z\left(\f{jl}{2^n}\right)$ for $j=0,1,\dots,2^n$. Then it is obvious that $\{z_j^n\}$ divides $\G$ into $2^n$ intervals with equal length. Let $\G_n$ be the curve that consists of all the line segments $z_j^nz_{j+1}^n$ for $j=0,\dots,2^n-1$. Then one has
\beqo
\sum\limits_{n=1}^\infty 2^n[l(\G)-l(\G_n)]<\infty.
\eeqo
Recall the definition of Peter Jone's $\beta$-number: given a curve $\G$, $x\in \BR^2$ and $t>0$,
\beqo
\beta_\G(x,t):=\inf\limits_L\sup\limits_{z\in B(x,t)\cap \G}\f{\mathrm{dist}(z,L)}{t},
\eeqo
where the infimum is over all lines hitting $B(x,t)$. Then for $\G\in\mathcal{G}_v$, it satisfies
\beqo
\int_\G\int_0^\infty\beta_\G^2(x,t)\f{dt\,dx}{t^2}<\infty.
\eeqo
\end{rmk}

\section{Existence of minimizers}

The primary goal of this section is to establish the existence for Problem P. Before we state the theorem, we need to clarify some basic settings. Throughout this section we assume the volume $v=1$. We will consider $\G\in\mathcal{G}_1$ such that $E_{\G}\leq M$ for some constant $M>0$ and $\G$ will only intersects with $x-$axis at two endpoints. As a consequence, $\G$ has all the geometric properties that we have shown in Section 2 (Lemma \ref{reversedtriangleineq}, Lemma \ref{extension}, Proposition \ref{chordarc}, Proposition \ref{vanishingchordarc} and Proposition \ref{weil-peterson}).

Also we need to discuss different notions of boundary since we will perform integration by parts in $\Om_\G$. In geometric measure theory, there are three different kinds of boundary for a set $E$ of finite perimeter: topological boundary $\pa E$, measure-theoretical boundary $\pa^e E$ and reduced boundary $\pa^*E$. We refer to \cite{federer} for detailed definitions of these notions. It is well-known that $\pa^*E\subset \pa^e E\subset \pa E$. For our domain $\Om_\G$, it is obvious that $\pa\Om_\G=\G\cup\{(x,0),-a\leq x\leq a\}$. By Lemma \ref{reversedtriangleineq}, one can easily verify that for any $z\in \G\backslash \{(-a,0), (a,0)\}$, we have
\beqo
\liminf\limits_{r\ri 0} \f{|\Om_\G\cap B(z,r)|}{\pi r^2}>0,\quad \limsup\limits_{r\ri 0} \f{|\Om_\G\cap B(z,r)|}{\pi r^2}<1
\eeqo
This means $z\in\pa^e \Om_\G$, and therefore $\pa\Om_\G\backslash\pa^e\Om_\G\subset\{(-a,0), (a,0)\}$. As for the relation between measure-theoretical boundary and reduced boundary, a well-known result by Federer says that $\mathcal{H}^1(\pa^e E\backslash \pa^*E)=0$. So in the proof below, we don't distinguish these different notions of boundary when we write boundary integral.

\begin{theorem}\label{existence}
There exists a $\G\in \mathcal{G}_1$ minimizing the functional $E(\G)$ defined by \eqref{energy}.
\end{theorem}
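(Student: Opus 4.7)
The plan is the direct method of the calculus of variations. Let $\G_n \in \mathcal{G}_1$ be a minimizing sequence with $E(\G_n) \to \inf_{\mathcal{G}_1} E \le M$, where $M = E(\G_0) < \infty$ comes from the explicit smooth competitor constructed at the start of Section~2. Every geometric estimate proved in Section~2 applies uniformly along $\G_n$ with constants depending only on $M$.

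\textbf{Compactness of the curves and of $\T_n$.} The lengths $l_n := l(\G_n) \le M$, so after extracting a subsequence $l_n \to l^* \le M$. Parameterizing each $\G_n$ on the common interval $[0,1]$ by $s \mapsto z_n(s l_n)$ yields $l_n$-Lipschitz maps whose images lie in a uniformly bounded region by Lemma~\ref{reversedtriangleineq}. Arzelà--Ascoli produces a uniform limit $z^* : [0,1] \to \BR^2$ with $z'_n \rightharpoonup (z^*)'$ weakly in $L^2(0,1)$. Applying Lemma~\ref{extension}, I extend each $\T_n$ to $\wt{\T}_n \in H^1(\BR^2)$ with $\|\wt{\T}_n\|_{H^1(\BR^2)}$ bounded by a constant depending only on $M$; passing to a further subsequence, $\wt{\T}_n \rightharpoonup \wt{\T}^*$ in $H^1(\BR^2)$ and $\wt{\T}_n \to \wt{\T}^*$ in $L^2_{\mathrm{loc}}$.

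\textbf{Admissibility of the limit.} Let $\G^*$ denote the image of $z^*$. Conditions (ii), (iv) of $\mathcal{G}_1$ follow from uniform convergence and the passage of the reverse triangle inequality of Lemma~\ref{reversedtriangleineq} to the limit, which in particular rules out self-crossings. Condition (iii) and the non-degeneracy $l^* > 0$ follow from the uniform chord-arc property of Proposition~\ref{chordarc}: if $\G^*$ collapsed onto the $x$-axis or to a point, the volumes $|\Om_{\G_n}| = 1$ would force a contradiction. Reparameterizing $z^*$ by arc-length on $[0, l^*]$ with $l^* := \liminf_n l_n$ gives (i), (v). Uniform convergence of the Jordan curves $\G_n$ together with the uniform two-point condition yields $\chi_{\Om_{\G_n}} \to \chi_{\Om_{\G^*}}$ in $L^1$, so the volume constraint $|\Om_{\G^*}| = 1$ is preserved.

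\textbf{Boundary data and lower semicontinuity.} The restriction $\T^* := \wt{\T}^*|_{\Om_{\G^*}}$ lies in $H^1(\Om_{\G^*})$. Its trace vanishes on the $x$-axis part of $\pa \Om_{\G^*}$ since $\T_n \equiv 0$ on the corresponding segments, which converge. On $\G^*$ itself, I would argue that the trace of $\T^*$ equals $\arcsin((y^*)')$ by testing against $\varphi \in C_c^\infty(\BR^2)$ and passing to the limit in the integration-by-parts identity on each $\Om_{\G_n}$, using the weak convergence of $\wt{\T}_n$ together with the uniform chord-arc convergence of $\G_n$. Having produced an admissible competitor $\T^*$ with the prescribed boundary data, replacing it by the harmonic extension only decreases the Dirichlet energy. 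Combined with weak $H^1$ lower semicontinuity of the gradient norm and standard lower semicontinuity of length under uniform convergence of rectifiable curves, we obtain $E(\G^*) \le \liminf_n E(\G_n) = \inf_{\mathcal{G}_1} E$, so $\G^*$ is a minimizer.

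\textbf{Main obstacle.} The principal difficulty is showing the trace of the weak limit $\wt{\T}^*$ on $\G^*$ equals $\arcsin((y^*)')$: the boundary data $\arcsin(y'_n)$ depends nonlinearly on derivatives that converge only weakly, so pointwise boundary values need not pass to the limit. I expect this to require a careful use of the vanishing chord-arc estimate (Proposition~\ref{vanishingchordarc}) and of the $H^{3/2}$ regularity of the arc-length parameterization (Proposition~\ref{weil-peterson}) to upgrade the convergence along the boundary enough to identify the trace in the nonlinear relation.
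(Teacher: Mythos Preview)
Your overall plan---the direct method with Arzel\`a--Ascoli for the curves, $H^1$ extension and weak compactness for $\T_n$, BV/$L^1$ compactness for $\chi_{\Om_n}$, and lower semicontinuity---matches the paper's proof, and you have correctly isolated the real difficulty: passing the tangential boundary condition to the limit when the boundary data $\arcsin y'_n$ depend nonlinearly on the only weakly convergent derivatives.

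However, the resolution you propose (upgrading boundary convergence via the vanishing chord--arc estimate and the $H^{3/2}$ parameterization) is not what the paper does, and it is not clear it would succeed: weak $L^2$ convergence of $y'_n$ does not in general imply any pointwise or trace-level identification of $\arcsin y'_n$. The paper avoids this issue entirely by \emph{reformulating} the boundary condition before taking the limit. Instead of tracking the scalar trace $\T|_\G=\arcsin y'$, it works with the vector field $n_i=(\cos\T_i,\sin\T_i)$ and the condition $n_i\cdot\nu_i=0$ on $\pa\Om_i$. Since $|\T_i|\le\pi/2$, the $n_i$ are uniformly bounded in $H^1$ and converge weakly to $n=(\cos\T,\sin\T)$; the Gauss--Green identity
\[
0=\int_{\pa^*\Om_i}\phi\,(n_i\cdot\nu_i)\,d\mathcal{H}^1=\int_{\Om_i}\di(\phi n_i)\,dx
\]
then passes to the limit using only $\chi_{\Om_i}\to\chi_\Om$ in $L^1$ and $n_i\rightharpoonup n$ in $H^1$, yielding $\int_{\pa^*\Om}\phi\,(n\cdot\nu)=0$ for all test $\phi$. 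Here $\nu$ is the measure-theoretic outer normal on the reduced boundary, well defined without any reference to $(y^*)'$. A second cut-off argument fixes the sign ($n=\tau$ on $\G$, $n=-\tau$ on the $x$-axis segment), recovering \eqref{bdycondition}. This is the key idea you are missing; once you adopt it, the nonlinear trace obstacle disappears and the rest of your argument goes through.
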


\begin{proof}
Let $\{\G_i\}_{i=1}^{\infty}$ be a minimizing sequence in $\mathcal{G}_1$.
\beqo
\lim\limits_{i\ri\infty} E(\G_i)=M_0:=\inf\limits_{\G\in\mathcal{G}_1}E(\G).
\eeqo
Let $\Om_i,\T_i$ denote the corresponding $\Om_{\G_i}, \T_{\G_i}$ respectively. For each $i$, we set $(\pm a_i,0)$ as the two endpoints of $\G_i$. Because $l(\G_i)\leq M$ for every $i$, we have $a_i< \f{M}{2}$. Also by Lemma \ref{reversedtriangleineq} and the fact that $|\Om_i|=1$ we have $a_i\geq \f{2}{C}$ where $C$ is the constant in \eqref{reversetri}. Now we summarize
all the properties (independent of $i$) we need for $\{(\G_i,\Om_i, \T_i, a_i)\}$ before taking a limit.
\begin{enumerate}
\item[(a)] $\f{2}{C}\leq a_i\leq \f{M}{2}$, $l(\G_i)\leq M$.
\item[(b)] $\overline{\Om}_i\subset B(0,2M)$, $|\Om_i|=1$ and $\pa\Om_i=\G_i\cup\{(x,0):-a_i\leq x\leq a_i\}$.
\item[(c)] $\G_i$ can be parameterized by $(x_i(t),y_i(t))$ such that
\begin{align*}
x_i(0)=-a_i, \; x_i(l(\G_i))=a_i,\; y_i(0)=y_i&(l(\G_i))=0,\\
x'_i(t)\geq 0,\; y_i(t)\geq 0,\; |x_i'(t)|^2+|y_i'(t)|^2&=1,\, a.e.\\
 \sqrt{|x_i(t+s)-x_i(t)|^2+|y_i(t+s)-y_i(t)|^2}\geq& \f{s}{C_3} \text{ for }t+s<l(\G_i)
\end{align*}
where $C_3$ is the constant in Proposition \ref{chordarc}.
\item[(d)] $\T_i$ can be extended to a $H^1$ function on $B(0,2M)$ such that $\|\T_i\|_{H^1(B(0,2M))}\leq C_5$ for some universal constant $C_5$.
\end{enumerate}

Then we claim that there is a subsequence, still denoted by $\{(\G_i,\Om_i, \T_i, a_i)\}$ that converges in the following sense:
\begin{enumerate}
\item $\chi_{\Om_i}\ri \chi_{\Om}$ weakly in $BV(B(0,2M))$ and strongly in $L^1(B(0,2M))$, for some $\Om$ which is a set of finite perimeter in $B(0,2M)$ with volume $1$.
\item $\T_i\ri \T$ weakly in $H^1(B(0,2M))$ and strongly in $L^2(B(0,2M))$ for some $\T\in H^1(B(0,1))$.
\item $l(\G_i)\ri l$, $a_i\ri a$ for some constant $l>0,a>0$.
\item $\G_i\ri \G$ in Hausdorff distance for some chord-arc curve $\G$. $\G$ can be parameterized by $x(t),y(t)$ such that
    \begin{align*}
    &x(0)=-a, \; x(l)=a,\; y(0)=y(l)=0,\\
    &x'(t)\geq 0,\; y(t)\geq 0,\; |x'(t)|^2+|y'(t)|^2\leq 1,\, a.e.\\
    &\sqrt{|x(t+s)-x(t)|^2+|y(t+s)-y(t)|^2}\geq \f{s}{C_3} \text{ for }t+s<l
    \end{align*}
\item $\pa^e \Om\subset \G\cup \{(x,0): -a\leq x\leq a\}$, $\nu\cdot (\cos\T,\sin\T)=0$ a.e. on $\pa^*\Om$.
\end{enumerate}
\begin{proof}[Proof of the convergence claim]
(1), (2), (3) are straightforward to check. (4) is a direct consequence of Arzela-Ascoli lemma and Property (c) that we list before, we omitted the detail of the proof. So we only prove (5). First we show $\pa^e\Om\subset \G\cup\{(x,0): -a\leq x\leq a\}$. Assume there exists a point $z\in \pa^e\Om$ such that $z\not\in \G\cup\{(x,0): -a\leq x\leq a\}$. Then by convergence properties (3) and (4), there exists a $r_0>0$ and $n\in\mathbb{N}$ such that
\beqo
B(z,r_0)\cap \pa\Om_i=\varnothing, \; \forall i\geq n.
\eeqo
For any  $i\geq n$, we have
\beqo
\f{|\Om_i\cap B(z,r)|}{|B(z,r)|}=0 \text{ or }1, \text{ for any } r\leq r_0
\eeqo
However, by convergence property (1), we have
\beqo
\f{|\Om\cap B(z,r)|}{|B(z,r)|}=\lim\limits_{i\ri\infty} \f{|\Om_i\cap B(z,r)|}{|B(z,r)|}=0 \text{ or }1, \text{ for any }r\leq r_0,
\eeqo
which contradicts with our assumption $z\in \pa^e{\Om}$. Therefore we have proved $\pa^e\Om\subset \G\cup\{(x,0): -a\leq x\leq a\}$.

Now we set
\beqo
\Om_{in}:= \text{ the domain enclosed by }\G \text{ and x-axis},\quad \Om_{out}:=\mathbb{R}^2\backslash\left( \G\cup\{(x,0): -a\leq x\leq a\}\cup \Om_{in} \right)
\eeqo
By similar density argument one can show $\Om_{in}\subset \Om^{0}$ and $\Om_{out}\subset \Om^{1}$, where $\Om^{t}$ is defined as $\{z: \lim\limits_{r\ri 0}\f{|\Om\cap B(z,r)|}{\pi r^2}=t\}$. After a modification of a measure zero set, we can simply identify $\Om$ as $\Om_{in}$. Now we are left to show the second part of (5), which says the tangential anchoring boundary condition still holds for the limit domain.  Let $\phi$ be an arbitrary $C^\infty$ function in $\mathbb{R}^2$, we define
\beqo
n_i:=(\cos\T_i,\sin\T_i), \quad n:=(\cos\T,\sin\T),\quad \nu_i=\text{normal vector on } \pa\Om_i.
\eeqo
Note that here all $n_i$ and $n$ are defined on the larger domain $B(0,2M)$. We first deduce that
\beq\label{convdiv}
\lim\limits_{i\ri\infty} \int_{\Om_i}\di(\phi n_i)\, dx=\int_{\Om }\di(\phi n)\,dx.
\eeq
In fact,
\begin{align*}
&|\int_{\Om_i}\di(\phi n_i)\, dx-\int_{\Om }\di(\phi n)\,dx|\\
\leq & |\int_{\Om} \di(\phi n-\phi n_i)\,dx|+|\int_{\Om\Delta\Om_i} |\di(\phi n_i)|\,dx|
\end{align*}
As $i\ri\infty $, the first term goes to zero because $n_i$ converges to $n$ weakly in $H^1$; the second term goes to zero since $\Om_i$ converges to $\Om$ in $L^1$ and $n_i$ are uniformly bounded in $H^1$. Also we have that the following Gauss-Green formula holds
\beq\label{gaussgreen}
\int_{\Om_i} \di(\phi n_i)\,dx=\int_{\pa^*\Om_i} \phi n_i\cdot\nu_i\,d\mathcal{H}^1,\quad \int_{\Om} \di(\phi n)\,dx=\int_{\pa^*\Om} \phi n\cdot\nu\,d\mathcal{H}^1
\eeq
We want to point out that \eqref{gaussgreen} is not trivial here since $\pa\Om_i$ is not in $C^1$. However in our problem, it is valid because all $\Om_i$ and $\Om$ are Sobolev extension domains and one can define the trace of $H^1$ function on the reduced boundary. We refer to \cite[Proposition 3.4.4]{qli} or \cite[Theorem 3.84]{ambrosio} for more details. We may now combine \eqref{gaussgreen} with \eqref{convdiv} and get that
\beqo
0=\lim\limits_{i\ri\infty}\int_{\pa\Om_i}\phi n_i\cdot\nu_i\,d\mathcal{H}^1=\int_{\pa\Om}\phi n\cdot\nu\,d\mathcal{H}^1.
\eeqo
Thus the tangential anchoring boundary condition is proved for $\Om$ and $n$.
\end{proof}

On boundary $\pa\Om_i$ or $\pa \Om$, we define the tangent vector $\tau_i$, or correspondingly $\tau$, by rotating the normal vector $\nu_i $ or $ \nu$ by $\f{\pi}{2}$ clockwise. One can check that
\beqo
n_i=\tau_i \text{ on }\G_i\backslash \{(x,0): -a_i\leq x\leq a_i\},\quad  n_i=-\tau_i \text{ on } \{(x,0): -a_i\leq x\leq a_i\}\backslash \G_i,\quad \forall i\in\mathbb{N}
\eeqo
Using similar arguments in the proof of tangential anchoring condition above again, we can show that
\beq\label{tangential}
n=\tau \text{ on }\G\backslash \{(x,0): -a\leq x\leq a\},\quad  n=-\tau \text{ on } \{(x,0): -a\leq x\leq a\}\backslash \G,\quad a.e. \;x\in\pa\Om
\eeq
The idea is to carefully choose a cut-off function $\phi$ and calculate $\int_{\pa^*\Om} \phi n\cdot\tau\,d\mathcal{H}^1$ using Gauss-Green formula. We omit the details here. Note that \eqref{tangential} is equivalent to our original boundary condition \eqref{bdycondition}, therefore we have verified that $\G\in \mathcal{G}_1$ and $\Om,\, \T$ are just the corresponding $\Om_\G,\,\T_\G$.

Finally, by convergence result (1--5) and lower semi-continuity we conclude that
\beqo
\int_{\Om}|\na \T|^2+l(\G)\leq \liminf\limits_{i\ri\infty} E(\G_i)=M_0
\eeqo
And by Lemma \ref{mini}, $\G$ won't touch $x-$axis besides two endpoints. So $(\G,\Om,\T)$ is a minimizer of Problem P. The proof is complete.

\end{proof}

Next we want to study the behavior of $\G\in \mathcal{G}_1$ near $(-a,0)$ and $(a,0)$. The following lemma indicates that $\G$ and $x-$axis form approximately cusps near two ends. Note that here we don't assume $\G$ is a minimizer.
\begin{lemma}\label{cusp}
Let $\G\in \mathcal{G}_1$ satisfy $E(\G)\leq M$. $\G$ only intersects with $x-$axis at $z_1=(-a,0)$ and $z_2=(a,0)$. For any $k>0$, there exists a constant $r$ that depends on $k$ and $\G$ such that
\begin{align*}
&\text{If }z=(x,y)\in \G\cap B(z_1,r), \text{ then } \f{y}{x+a}\leq k,\\
&\text{If }z=(x,y)\in \G\cap B(z_2,r), \text{ then } \f{y}{a-x}\leq k.
\end{align*}
\end{lemma}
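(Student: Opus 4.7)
The plan is to argue by contradiction, rescale the geometry around $z_1$, and show that the rescaled problem converges to an impossible limit configuration. By symmetry it suffices to treat the endpoint $z_1 = (-a, 0)$. Suppose the conclusion fails: there exist $k_0 > 0$ and a sequence $z_n = (x_n, y_n) \in \G$ with $z_n \to z_1$ and $y_n/(x_n + a) > k_0$. Set $\rho_n := |z_n - z_1|$, so that $\rho_n \to 0$ and $\tilde z_n := (z_n - z_1)/\rho_n$ satisfies $|\tilde z_n| = 1$ together with $\tilde y_n \geq k_0/\sqrt{1+k_0^2} > 0$. The key observation driving the contradiction is that, since $|\nabla \T|^2 \in L^1(\Om_\G)$ and $|B(z_1, 2\rho_n) \cap \Om_\G| \to 0$, absolute continuity of the Lebesgue integral gives
\beqo
\int_{B(z_1, 2\rho_n) \cap \Om_\G} |\nabla \T|^2 \, dz \longrightarrow 0.
\eeqo

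Rescaling by $1/\rho_n$ around $z_1$ produces curves $\tilde \G_n := (\G - z_1)/\rho_n$, domains $\tilde \Om_n := (\Om_\G - z_1)/\rho_n$, and functions $\tilde \T_n(\tilde z) := \T(z_1 + \rho_n \tilde z)$. The rescaled curves are chord-arc with the same constant $C_3$ (Proposition \ref{chordarc}) and start at the origin; the $x$-axis portion of $\pa \tilde \Om_n$ inside $B(0, 2)$ is the fixed segment from the origin to $(2, 0)$ once $n$ is large; the tangent anchoring condition is preserved; and the Dirichlet energy is scale-invariant in two dimensions, so
\beqo
\int_{B(0, 2) \cap \tilde \Om_n} |\nabla \tilde \T_n|^2 \, d\tilde z = \int_{B(z_1, 2\rho_n) \cap \Om_\G} |\nabla \T|^2 \, dz \to 0.
\eeqo
The uniform chord-arc bound together with $\tilde \T_n = 0$ on the $x$-axis segment yields a uniform Poincar\'e inequality on $B(0, 2) \cap \tilde \Om_n$, so $\tilde \T_n$ remains bounded in $H^1$.

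Next, extract a limit just as in the proof of Theorem \ref{existence}: by Arzel\`a--Ascoli applied to the $1$-Lipschitz arc-length parametrizations, pass to a subsequence with $\tilde \G_n \to \tilde \G_\infty$ uniformly on $B(0, 2)$, $\tilde z_n \to \tilde z_\infty$ with $|\tilde z_\infty| = 1$ and $\tilde y_\infty \geq k_0/\sqrt{1+k_0^2}$, $\tilde \Om_n \to \tilde \Om_\infty$ in $L^1$, and $\tilde \T_n \rightharpoonup \tilde \T_\infty$ weakly in $H^1(B(0, 2))$. The tangent anchoring condition $n \cdot \nu = 0$ passes to the limit via the same Gauss--Green argument used in Theorem \ref{existence}, giving $n_\infty \cdot \nu_\infty = 0$ on $\tilde \G_\infty$. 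Lower semicontinuity of Dirichlet energy then forces $\int_{B(0, 2) \cap \tilde \Om_\infty} |\nabla \tilde \T_\infty|^2 = 0$, so $\tilde \T_\infty$ is constant on each connected component of $B(0, 2) \cap \tilde \Om_\infty$; the component adjacent to the $x$-axis segment inherits the value $0$ from the trace condition. The anchoring condition then gives $n_\infty = (\pm 1, 0)$ on the arc of $\tilde \G_\infty$ bordering this component, so the tangent to $\tilde \G_\infty$ is horizontal a.e. and $\tilde y_\infty(s) \equiv 0$ along that arc. This contradicts $\tilde z_\infty \in \tilde \G_\infty$ with $\tilde y_\infty > 0$.

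The main technical obstacle is the passage of the tangent-anchoring boundary condition to the limit; this is handled exactly by the Gauss--Green argument from Theorem \ref{existence}, which relies on the uniform Sobolev extension property (Lemma \ref{extension}) so that traces behave well under the $L^1$ convergence of $\tilde \Om_n$. A secondary point is guaranteeing a uniform Poincar\'e constant in $B(0, 2) \cap \tilde \Om_n$ to control $\tilde \T_n$ in $H^1$; this follows from the uniform chord-arc bound combined with the vanishing trace on the $x$-axis segment, both of which are preserved under rescaling.
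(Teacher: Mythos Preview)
Your proof is correct and follows essentially the same approach as the paper: argue by contradiction, blow up around $z_1$ along a sequence of scales $\rho_n\to 0$, use absolute continuity of $|\nabla\T|^2\,dx$ to force the rescaled Dirichlet energies to vanish, pass to a limit exactly as in Theorem~\ref{existence} (with the tangential anchoring condition transferred via the Gauss--Green argument), and obtain a constant limit $\T_\infty$ whose anchoring condition is incompatible with the limit curve reaching a point at height $\geq k_0/\sqrt{1+k_0^2}$. The only notable difference is that the paper first extends $\T$ to all of $\mathbb{R}^2$ via Lemma~\ref{extension} \emph{before} rescaling, so the rescaled $\T_i$ are automatically uniformly bounded in $H^1(B(0,1))$; you instead invoke a Poincar\'e inequality on the variable domains $B(0,2)\cap\tilde\Om_n$, which is unnecessary since $\T$ is a priori bounded (it takes values in $[-\pi/2,\pi/2]$) and hence $\|\tilde\T_n\|_{L^2}$ is controlled trivially.
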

\begin{rmk}
This lemma implies that as $z\in \G$ approaches $z_1(\text{or }z_2)$, the angle between the ray $z-z_1(\text{or }z-z_2)$ and $x-$axis converges to $0$.
\end{rmk}
\begin{proof}[Proof of Lemma \ref{cusp}]
Without loss of generality, we only prove the lemma near $z_1=(-a,0)$. We argue by contradiction. Assume the Lemma is false, there would exist a constant $k>0$, a sequence of radiuses $\{r_i\}_{i=1}^\infty$ and a sequence of points $\{(x_i,y_i)\}_{i=1}^\infty\subset\G$ such that
\beqo
r_i\ri 0,\quad \sqrt{(x_i+a)^2+y_i^2}=r_i,\quad \f{y_i}{x_i+a}=k.
\eeqo
By Lemma \ref{extension} we can extend $\T$ from $\Om_\G$ to the whole $\mathbb{R}^2$ such that $\|\T\|_{H^1(\mathbb{R}^2)}\leq C$. For every $i$, we introduce the following rescaled functions:
\begin{align*}
&\G_i:=\{\f{z-z_1}{r_i}:z\in \G\cap B(z_1,r_i)\},\\
&\Om_{i}:=\{\f{z-z_1}{r_i}:z\in \Om_\G\cap B(z_1,r_i)\},\\
&\T_i(z):=\T(z_1+r_iz) \text{ for }z\in B(0,1)
\end{align*}

One can easily check the following properties hold
\begin{enumerate}
\item[(a)] $l(G_i)\leq C_3$ for the constant $C_3$ from Proposition \ref{chordarc}.
\item[(b)] $\Om_i\subset B(0,1)$, $|\Om_i|\geq \f{\arctan{k}}{2}-\f{k}{2(1+k^2)}=:C_6$
\item[(c)] $\G_i$ can be parameterized by $(x_i(t),y_i(t))$ such that
\begin{align*}
x_i(0)=0, \; x_i(l(\G_i))=\f{1}{\sqrt{1+k^2}},\; y_i(0)=0,\; & y(l(\G_i))=\f{k}{\sqrt{1+k^2}},\\
x'_i(t)\geq 0,\; y_i(t)\geq 0,\; |x_i'(t)|^2+|y_i'(t)|^2&=1,\, a.e.\\
 \sqrt{|x_i(t+s)-x_i(t)|^2+|y_i(t+s)-y_i(t)|^2}\geq& \f{s}{C_3} \text{ for }t+s<l(\G_i)
\end{align*}
\item[(d)] $\{\T_i\}_{i=1}^\infty$ is uniformly bounded in $H^1(B(0,1))$ and we have
\beqo
\lim\limits_{i\ri\infty}\int_{B(0,1)}|\na \T_i|^2\,dx=0
\eeqo
\end{enumerate}

Passing if necessary to a subsequence, we get
\begin{enumerate}
\item $\chi_{\Om_i}\ri \chi_{\Om}$ weakly in $BV(B(0,1))$ and strongly in $L^1(B(0,1))$, for some $\Om$ with volume lower bound $|\Om|\geq C_6$.
\item $\T_i\ri \T$ weakly in $H^1(B(0,1))$ and strongly in $L^2(B(0,1))$ for some $\T\in H^1(B(0,1))$.
\item $l(\G_i)\ri l$ for a constant $l>0$.
\item $\G_i\ri \G^*$ in the sense of Hausdorff distance for some chord-arc curve $\G^*$. $\G^*$ can be parameterized by $x(t),y(t)$ such that
    \begin{align*}
    &x(0)=0, \; x(l)=\f{1}{\sqrt{1+k^2}},\; y(0)=0,\; y(l)=\f{k}{\sqrt{1+k^2}},\\
    &x'(t)\geq 0,\; y(t)\geq 0,\; \f{1}{C_3}\leq |x'(t)|^2+|y'(t)|^2\leq 1,\, a.e.
    \end{align*}
\item $\pa^e \Om \subset \G^*\cup \{(x,0): 0\leq x\leq 1\}\cup \pa B(0,1)$, $\quad \nu\cdot (\cos\T,\sin\T)=0$ a.e. on $\pa^*\Om$.
\end{enumerate}
The proof of the above convergence property is the same as Theorem \ref{existence}. By lower semi-continuity and weak convergence of $\T_i$ in $H^1(B(0,1))$ we have
\beqo
\int_{B(0,1)}|\na \T|^2\,dx\leq \lim\limits_{i\ri\infty }\int_{B(0,1)}|\na \T_i|^2\,dx=0
\eeqo
Therefore $\T$ is a constant function, which contradicts with (5) since the normal vector of $\pa\Om$ obviously cannot be orthogonal to a constant vector by simple geometry.
\end{proof}

As for the regularity of $\G$ away from two endpoints, Proposition \ref{vanishingchordarc} and Proposition \ref{weil-peterson} tells that $\nu$ belongs to $\mathrm{VMO}$ and $H^{1/2}([0,l])$. Unfortunately this is the best regularity result we have now. Here we give the following natural open problems:

\begin{problem}{1}
Is $\G$ a $C^\infty$ curve, or at least $C^1$?
\end{problem}
\begin{problem}{2}
Can one write $\G$ as a curve of function $f(x)$, such that $|\f{df}{dx}|\leq C$ for some constant $C<\infty$?
\end{problem}

The difficulty in answering these questions is due to the strong non-local character of the tangential anchoring boundary condition. It prevents us from modifying $\G$ locally to obtain an energy competitor and then deduce decay of some energy quantities. Therefore some new ideas and methods are needed in order to utilize the minimality. We now assume the statement in Problem 2 is true, and we compute the Euler-Lagrange equation that $f$ should satisfy.

Let $\G=\{(x,f(x)):x\in[-a,a]\}$ such that
\beqo
f(-a)=f(a)=0, \quad f(x)>0\,\text{ for }x\in(-a,a),\quad \text{ and }\,\int_{-a}^a f(x)\,dx=1.
\eeqo

We write $\Om_\G$ as $\Om_f$. Then we consider the perturbation of $f(x)$ and domain $\Om_f$
\beqo
f_t(x)=f(x)+tg(x),\, \Om_{f_t}(x)=\{(x,y):x\in[-a,a],\, y\in[0,f(x)+tg(x)]\},
\eeqo
where $g\in C_0^1(-a,a)$.
We denote the domain variation by $\Phi(t,x)$ such that
\beqo
\Phi(t,\Om_f)=\Om_{f_t},\, \Phi(t,(x,y))=(x,y+\f{y}{f(x)}tg(x))
\eeqo
By this definition, we can see that $\Phi(t)$ satisfies that
\beqo
\Phi(0)=I,\, \Phi'(0):=\f{d}{dt}(\Phi(t)-I)=V(x,y)=(0,\f{y}{f(x)}g(x)).
\eeqo
$\T(t,z)$ solves the equation
\beq\label{eqn:Thetat}
\begin{cases}
-\D \T(t,z)=0&\text{ in }\Om_{f_t},\\
\T(t)=\arctan{(f'+tg')} &\text{ on }\pa\Om_{f_t}.
\end{cases}
\eeq

Here $z=(x,y)\in\BR^2$. The functional becomes
\beqo
F(f_t)=\int_{-a}^a \sqrt{1+|f_t'(x)|^2}\,dx+ \int_{-a}^a\int_0^{f_t(x)}|\na \Theta(t,z)|^2\,dxdy=:F_1(f_t)+F_2(f_t).
\eeqo
Suppose $f$ is smooth and $t\ri \T(t)$ has good differentiability properties(denote by $\T'$ its derivative at $0$). We can differentiate (\ref{eqn:Thetat}) inside $\Om_f$ and at the boundary, by differentiating the following identity:
\beqo
\text{For }z=(x,f(x)), \quad \Theta(t,\Phi(t,z))=\arctan{(f'+tg')}.
\eeqo
We obtain
\begin{align*}
&-\D \T'=0\text{ in }\Om_f\\
&\T'(x,f(x))+\na \T(x,f(x)) \cdot V=\f{g'(x)}{1+|f'(x)|^2}  \text{ on } \G\\
&\T'=0 \text{ on }\{(x,0):x\in[-a,a]\}.
\end{align*}
Therefore, $\T'$ is the harmonic function with a Dirichlet boundary condition(depending on $f,g$). We compute the derivative of $F(f_t)$ at $t=0$. For the first part, we easily obtain
\beqo
\f{d}{dt}F_1=\int_{-1}^1\f{f'g'}{\sqrt{1+|f'|^2}}dx.
\eeqo
For the second part, we have
\begin{align*}
\f{d}{dt}F_2&=\int_{\Om_f}\bigg\{2\na\T'\cdot\na \T +\mathrm{div}(|\na\T|^2 V)\bigg\}\,dxdy\\
&=\int_{\G} \bigg\{   2\f{\pa\T}{\pa \nu}(\f{g'}{1+|f'|^2}-\na\T\cdot V) +|\na\T|^2(V\cdot \nu)  \bigg\} \,d\mathcal{H}^1\\
&=\int_{-a}^a\bigg\{    2\f{\pa\T}{\pa \nu}\f{g'}{\sqrt{1+|f'(x)|^2}}-2(\na \T\cdot \nu)\cdot(\na \T\cdot V)\sqrt{1+|f'(x)|^2}+g(x)|\na \T|^2      \bigg\}\,dx
\end{align*}
where $\nu$ is the normal vector on $\G$. Here we have used the boundary condition of $\T'$ and integration by parts. Also we have used the following formula
\beqo
V(z)=(0,g(x)),\quad  \nu(z)=\f{(-f'(x),1)}{\sqrt{1+|f'(x)|^2}}\quad \text{ for }\,z=(x,f(x))\in \pa\Om_f.
\eeqo
Let $\f{d}{dt}F(f_t)=0$ and take into account the volume constraint, we obtain the following Euler-Lagrange equation
\beq\label{EL}
\lambda = -\f{d}{dx}\left(\f{f'}{\sqrt{1+|f'|^2}}\right)
-2\f{d}{dx}\left(   \f{\pa\T}{\pa \nu}\f{1}{\sqrt{1+|f'(x)|^2}} \right)-\f{\pa \T}{\pa \nu}\f{\pa\T}{\pa y}\sqrt{1+|f'|^2}+|\na \T|^2
\eeq
where $\lam$ is the Lagrange multiplier and the derivative of $\T$ is taking value at $(x,f(x))$. Note that \eqref{EL} is complicated and contains some highly non-local terms, such as the Dirichlet-to-Neumann map $\f{\pa\T}{\pa\nu}$. The first part of the equation is the minimal surface equation while the rest comes from the Dirichlet energy with tangential anchoring condition and domain variation. It will be very interesting to study the well-posedness of \eqref{EL} and we believe that the key of solving the regularity problem of $\G$ is to understand this equation.

\section{Large volume limit and small volume limit}

In this section we study the behavior of the minimizer as the volume $v$ tends to be extremely large or small. A naive idea is to analysis the functional \eqref{energy} from a scaling point of view. The curve length term is of dimension one while the Dirichlet energy term is of dimension zero. Therefore, when the volume is very large, the first term will be the dominating term and the minimizer is expected to be close to a semicircle (minimizes length of graph under fixed volume constraint). On the other hand, when the volume is very small, the domain is energy preferable to be very thin to avoid large elastic energy. We will present more rigorous analysis in the rest of this section.

\subsection{Large volume limit}
Since we are only interested in the shape of $\G$, we will modify Problem P and restrict $a=1$. First we make the following notations:
\beqo
\mathcal{G}_v^a:=\{\G\in\mathcal{G}_v:\, \G \text{ only intersects with $x$-axis at }(a,0),\,(-a,0) \},\quad \mathcal{G}^a:=\bigcup\limits_{v>0} \mathcal{G}_v^a.
\eeqo
Then we can write Problem P as
\beqo
\min\limits_{a>0}\min\limits_{\G\in \mathcal{G}_v^a} \bigg\{ \int_{\Om_\G} |\na \Theta|^2\,dxdy+l(\G) \bigg\}.
\eeqo
Let $\bar{x}=\f{x}{a}$, $\bar{y}=\f{y}{a}$, $\bar{\Theta}(\bar{x},\bar{y})=\Theta(a\bar{x},a\bar{y})$, the minimization problem becomes
\beqo
\min\limits_{a>0}\min\limits_{\G\in \mathcal{G}_{v/a^2}^1} \bigg\{ \int_{\Om_\G} |\na \Theta|^2\,dxdy+a\cdot l(\G) \bigg\}.
\eeqo
Setting $\tilde{a}=\f{a}{\sqrt{v}}$ leads to
\beq\label{minimization1}
\min\limits_{\tilde{a}>0}\min\limits_{\G\in \mathcal{G}_{1/\tilde{a}^2}^1} \bigg\{ \int_{\Om_\G} |\na \Theta|^2\,dxdy+\tilde{a}\sqrt{v}\cdot l(\G) \bigg\}.
\eeq
This is equivalent to
\beq\label{minimization2}
\min\limits_{\G\in \mathcal{G}^1} \bigg\{ \int_{\Om_\G} |\na \Theta|^2\,dxdy+\sqrt{v}\f{ l(\G)}{\sqrt{|\Om_\G|}} \bigg\}.
\eeq
When $v\gg 1$, we consider the following functional for $\G\in \mathcal{G}^1$:
\beqo
E_v(\G)=\f{1}{\sqrt{v}}\int_{\Om_\G} |\na \Theta|^2\,dxdy+\f{ l(\G)}{\sqrt{|\Om_\G|}}
\eeqo
We denote by $\G_v$ the minimizer of functional $E_v(\G)$. As $v\ri +\infty$, one expects that $\G_v$ will ``converge" in some proper sense to $\G^*:=\{(x,\sqrt{1-x^2}): x\in[-1,1]\}$, which is well known to minimize the following functional
\beqo
F(\G)=\f{ l(\G)}{\sqrt{|\Om_\G|}}\; \text{ for }\G\in\mathcal{G}^1.
\eeqo
We have the following lemma:

\begin{lemma}
$\lim\limits_{v\ri\infty }E_v(\G_v)= \sqrt{2\pi}=F(\G^*)$
\end{lemma}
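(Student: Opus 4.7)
The plan is to sandwich $E_v(\G_v)$ between $\sqrt{2\pi}$ from the isoperimetric inequality and $\sqrt{2\pi}+o(1)$ from a cleverly chosen test competitor, which will give the stated convergence.

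\emph{Lower bound.} First I would observe that for every $\G\in\mathcal{G}^1$, the Dirichlet term is nonnegative, so
\[
E_v(\G)\,\geq\,\frac{l(\G)}{\sqrt{|\Om_\G|}}\,=\,F(\G).
\]
By definition of $\mathcal{G}^1$, a curve $\G$ meets the $x$-axis only at its two endpoints $(\pm 1,0)$, so the reflection of $\Om_\G$ across the $x$-axis is a bounded simply-connected set of area $2|\Om_\G|$ whose boundary is a rectifiable Jordan curve of length $2\,l(\G)$. The classical isoperimetric inequality then yields $(2\,l(\G))^2\geq 4\pi(2|\Om_\G|)$, i.e.\ $F(\G)\geq\sqrt{2\pi}$, with equality iff $\G$ is a semicircle. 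Consequently $E_v(\G_v)\geq\sqrt{2\pi}$ for every $v>0$.

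\emph{Upper bound via test competitors.} The semicircle $\G^*$ is not itself admissible: its tangent is vertical at the endpoints, so the trace $\Theta=\arcsin y'$ jumps by $\pm\pi/2$ at $(\pm 1,0)$ and no finite-energy harmonic extension exists. Instead, for each $\epsilon>0$ I would construct a \emph{fixed} admissible curve $\G_\epsilon\in\mathcal{G}^1$, independent of $v$, satisfying
\[
F(\G_\epsilon)\,\leq\,\sqrt{2\pi}+\epsilon, \qquad K(\epsilon):=\int_{\Om_{\G_\epsilon}}|\nabla\Theta_\epsilon|^2\,dxdy\,<\,\infty.
\]
Once such a $\G_\epsilon$ is available, minimality of $\G_v$ forces
\[
\sqrt{2\pi}\,\leq\,E_v(\G_v)\,\leq\,E_v(\G_\epsilon)\,=\,\frac{K(\epsilon)}{\sqrt v}+F(\G_\epsilon)\,\leq\,\frac{K(\epsilon)}{\sqrt v}+\sqrt{2\pi}+\epsilon,
\]
and letting first $v\to\infty$ and then $\epsilon\to 0$ would close the argument.

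\emph{Construction of $\G_\epsilon$.} Starting from $\G^*=\{(x,\sqrt{1-x^2}):x\in[-1,1]\}$ and a small parameter $\delta>0$, I would modify $\G^*$ only on the two end-strips $x\in[-1,-1+\delta]\cup[1-\delta,1]$, replacing it there by a $C^1$ graph $y=h_\delta(x)$ with $h_\delta(\pm 1)=h_\delta'(\pm 1)=0$ and $C^1$-matching to the semicircle at $x=\pm(1-\delta)$. The tangency $h_\delta'(\pm 1)=0$ forces the new curve to be tangent to the $x$-axis at its endpoints, so the boundary trace $\Theta_\delta$ extends continuously to $\partial\Om_{\G_\delta}$ with value $0$ at the cusps; since $\G_\delta$ is chord-arc and $\Theta_\delta$ is Lipschitz on the non-cusp part of the boundary, Lemma~\ref{extension} applied to $\G_\delta$ combined with standard $H^{1/2}$-trace theory yields a finite-energy harmonic extension, i.e.\ $K(\delta)<\infty$. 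A short perturbation estimate --- the replaced arc of $\G^*$ has length $O(\sqrt\delta)$ and bounds a subregion of area $O(\delta^{3/2})$, and the replacement $h_\delta$ has derivative of size $O(1/\sqrt\delta)$ on an interval of length $\delta$ --- shows
\[
|l(\G_\delta)-\pi|=O(\sqrt\delta), \qquad \bigl||\Om_{\G_\delta}|-\tfrac{\pi}{2}\bigr|=O(\delta^{3/2}),
\]
so $F(\G_\delta)=\sqrt{2\pi}+O(\sqrt\delta)$. Taking $\delta$ small enough that the error falls below $\epsilon$ and then dilating to restore the area-$1$ normalization built into $\mathcal{G}^1$ (which leaves the scale-invariant quantity $F$ unchanged) would give the required $\G_\epsilon$.

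\emph{Main obstacle.} The only delicate step is the construction in the previous paragraph: producing competitors in $\mathcal{G}^1$ that are arbitrarily close to the isoperimetric optimum while accommodating the forced cusp structure at the two anchoring points. The explicit cusp interpolation above should handle it, and the possible divergence of $K(\epsilon)$ as $\epsilon\to 0$ is harmless because $K(\epsilon)$ is divided by $\sqrt v$ and $v\to\infty$ is taken first. Everything else is two-line bookkeeping.
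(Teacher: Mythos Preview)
Your proposal is correct and follows essentially the same route as the paper: the isoperimetric lower bound via reflection, and the upper bound via cusped near-semicircle competitors. The paper's execution differs only in that it writes down an explicit circular-arc modification and an explicit test function $\Theta^\epsilon(x,y)=\tfrac{y}{f^\epsilon(x)}\arctan\!\big(\tfrac{d}{dx}f^\epsilon(x)\big)$, computes $\int|\nabla\Theta^\epsilon|^2=O(1/\epsilon)$ directly, and then couples $\epsilon=v^{-1/4}$, whereas you take the sequential limit $v\to\infty$ then $\epsilon\to 0$. Two small caveats on your write-up: invoking Lemma~\ref{extension} to obtain $K(\delta)<\infty$ is circular as stated, since that lemma presupposes $\Gamma\in\mathcal G_1$ and hence finite energy---you should instead verify finiteness by an explicit test function as the paper does, or argue directly from the $C^1$ regularity of your $\Gamma_\delta$; and the final ``dilation to restore area-$1$ normalization'' is unnecessary and would move the endpoints, since $\mathcal G^1=\bigcup_{v>0}\mathcal G_v^1$ constrains only the endpoints $(\pm1,0)$, not the enclosed area.
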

\begin{proof}
We borrow the idea of "adding two cusps" from \cite{gnsv}. We modify $\G^*$ near $x=-1$ and $x=1$ by adding two cusps. For $\e<<1$,  we define a function $\bar{f}^\e$ as follows
\beqo
\bar{f}^\e(x)=\begin{cases}
\sqrt{1-x^2},& |x|\leq \sqrt{1-\e^2},\\
\f{\e}{1-\e}-\sqrt{(\f{\e}{1-\e})^2-(x+\sqrt{\f{1+\e}{1-\e}})^2}, & x\in (-\sqrt{\f{1+\e}{1-\e}},-\sqrt{1-\e^2}),\\
\f{\e}{1-\e}-\sqrt{(\f{\e}{1-\e})^2-(-x+\sqrt{\f{1+\e}{1-\e}})^2}, & x\in (\sqrt{1-\e^2},\sqrt{\f{1+\e}{1-\e}}).
\end{cases}
\eeqo
Note that here we change the graph near two endpoints of $\G^*$ into two circular arcs to make sure the derivative of $\bar{f}^\e$ vanishes near two end points (See Figure \ref{fig3}).

\begin{figure}[H]
\centering
\begin{tikzpicture}[scale=3.9]
\clip (-1.7,-0.3) rectangle (1.7,1.2);
\draw[thick] (-1.4,0)--(1.4,0);
\draw[thick] ({cos(15)},{sin(15)}) arc [start angle=15, end angle=165, radius=1];
\draw[dashed] ({cos(15)},{sin(15)}) arc [start angle=15, end angle=0, radius=1];
\draw[dashed] ({-cos(15)},{sin(15)}) arc [start angle=165, end angle=180, radius=1];
\draw[thick] ({cos(15)},{sin(15)}) arc [start angle=195, end angle=270, radius={sin(15)/(1-sin(15))}];
\draw[thick] ({-cos(15)},{sin(15)}) arc [start angle=-15, end angle=-90, radius={sin(15)/(1-sin(15))}];
\filldraw[black] ({cos(15)},{sin(15)}) circle[radius=0.4pt] node[left] {$(\sqrt{1-\e^2},\e)$};
\filldraw[black] ({-cos(15)},{sin(15)}) circle[radius=0.4pt] node[right] {$(-\sqrt{1-\e^2},\e)$};
\filldraw[black] ({sqrt((1+sin(15))/(1-sin(15)))},0) circle[radius=0.4pt] node[below] {$(\sqrt{\frac{1+\e}{1-\e}},\e)$};
\draw[dashed] (0,0)--({sqrt((1+sin(15))/(1-sin(15)))},{sin(15)/(1-sin(15))});
\draw[dashed] ({sqrt((1+sin(15))/(1-sin(15)))},{sin(15)/(1-sin(15))})-- node[right] {$r=\frac{\e}{1-\e}$}({sqrt((1+sin(15))/(1-sin(15)))},0);
\end{tikzpicture}
\caption{The graph of function $\bar{f}^\e$}
\label{fig3}
\end{figure}
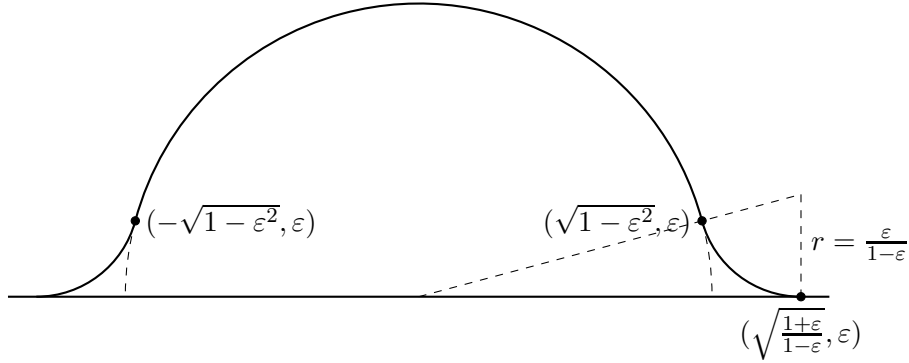

Then we set for any $\e\in (0,\f14)$,
\begin{align*}
f^\e(x)&=\bar{f}^\e(\sqrt{\f{1+\e}{1-\e}}x),\\
\G^\e&=\{(x,f^\e{x}): x\in[-1,1]\},\\
\T^\e(x,y)&=\arctan{\f{d}{dx}f^\e(x)}\quad \text{for }(x,y)\in \G^\e\\
\T^\e(x,y)&=\f{\arctan{\f{d}{dx}f^\e(x)}}{f^\e(x)}\cdot y\quad \text{for }(x,y)\in \Om_{\G^\e}
\end{align*}
It is straightforward to check that $\G^\e$ satisfies the following property:
\begin{enumerate}
  \item $\G^\e\in\mathcal{G}^1$,
  \item $\lim\limits_{\e\ri 0} F(\G^\e)=\sqrt{2\pi}$,
  \item $\int_{\Om_{\G^\e}}|\na \T^\e|^2\,dydx=O(\f{1}{\e})$.
\end{enumerate}
For $v\gg 1$, we set $\e=v^{-\f14}$. Then $\f{1}{\sqrt{v}}\int_{\Om_{\G^\e}}|\na \T^\e|^2\,dydx=O(v^{-\f14})$. This implies
\beqo
\lim\limits_{v\ri\infty} E_v(\G_v)\leq \lim\limits_{v\ri\infty} E_v(\G^{\e})=\sqrt{2\pi}\leq \lim\limits_{v\ri\infty}E_v(\G_v).
\eeqo
\end{proof}

\begin{rmk}
If $\lim\limits_{v\ri \infty}F(\G_v)=\sqrt{2\pi}$, then
\beqo
\lim\limits_{v\ri \infty }|\Om_{\G_v}\Delta\Om_{\G^*}|=0,\quad \lim\limits_{v\ri \infty }\mathrm{d}_{\mathcal{H}}(\G_v ,\G^*)=0,
\eeqo
where $\mathrm{d}_{\mathcal{H}}$ is the Hausdorff distance. This is an easy consequence of the stability of isoperimetric inequality (see \cite{fmp}).
\end{rmk}

\subsection{Small volume limit}

First we prove the following lemma which provides a rough estimate for the Dirichlet energy when the volume of droplet is sufficiently small.
\begin{lemma}\label{smallenergy}
Take $\e\ll 1$, there exist constants $c$ and $C$ which are independent of $\e$, such that for any $\G\in\mathcal{G}_{\e}^1$, it holds that
\beqo
c\,\e\leq \int_{\Om_\G} |\na \T|^2 \,dxdy \leq C\,\e.
\eeqo
\end{lemma}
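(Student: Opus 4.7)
The plan is to read Lemma~\ref{smallenergy} as controlling the Dirichlet energy at (any) minimizer of Problem~P restricted to $\mathcal{G}_\e^1$, since the upper bound $\int|\na\T|^2\le C\e$ cannot survive for arbitrarily wiggly admissible curves in the class. I would prove the two bounds separately, leaning on two trivial facts valid for every $\G\in\mathcal{G}_\e^1$: first, $l(\G)\ge 2$ (from the arc-length identity $\int_0^l x'(t)\,dt=2$ together with $|x'|\le 1$), and second, $\max y(t)\ge \e/2$ (from $\Om_\G\subset[-1,1]\times\mathbb{R}_+$ having area $\e$).

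For the upper bound I would exhibit an explicit smooth competitor. Set $\G_\e=\{(x,f_\e(x)):x\in[-1,1]\}$ with $f_\e(x)=\tfrac{\e}{2}(1+\cos\pi x)$, so $\int_{-1}^1 f_\e\,dx=\e$ and $\G_\e\in\mathcal{G}_\e^1$. The tangential datum on $\G_\e$ is $\arctan f'_\e(x)=O(\e)$, and the $H^1$ test function $\tilde\T(x,y)=\arctan(f'_\e(x))\cdot y/f_\e(x)$ realizes the boundary condition. Near $x=\pm 1$ one has $f_\e(x)\sim\tfrac{\pi^2\e}{4}(1\mp x)^2$ and $\arctan f'_\e(x)\sim -\tfrac{\pi^2\e}{2}(1\mp x)$, whence $\tilde\T(x,y)\sim -2y/(1\mp x)$, giving $|\partial_y\tilde\T|\sim 1/(1\mp x)$ and $|\partial_x\tilde\T|\sim y/(1\mp x)^2$. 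A direct computation yields
\begin{equation*}
\int_0^{f_\e(x)}|\partial_y\tilde\T|^2\,dy \le C\e,\qquad \int_0^{f_\e(x)}|\partial_x\tilde\T|^2\,dy \le C\e^3(1-x^2)^{2},
\end{equation*}
so that $\int_{\Om_{\G_\e}}|\na\tilde\T|^2\le C\e$, and by the Dirichlet principle the harmonic extension $\T_\e$ satisfies $\int|\na\T_\e|^2\le C\e$. Combined with $l(\G_\e)=2+O(\e^2)$ this gives $E(\G_\e)\le 2+C\e$. For any minimizer $\G\in\mathcal{G}_\e^1$,
\begin{equation*}
\int_{\Om_\G}|\na\T|^2\,dxdy \;=\; E(\G)-l(\G) \;\le\; E(\G_\e)-2 \;\le\; C\e.
\end{equation*}

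For the lower bound I would combine the area constraint with a vertical-slice Poincar\'e inequality. The pointwise bound $|\arcsin s|\ge |s|$ on $[-1,1]$ together with $\int_0^l|y'(t)|\,dt\ge 2\max y\ge\e$ gives $\int_\G|\T|\,dl\ge\e$. To transfer this to the bulk, set $y_{\mathrm{top}}(x)=\sup\{y:(x,y)\in\Om_\G\}$; since $\T(x,0)=0$ on the $x$-axis, Cauchy--Schwarz on the vertical segment followed by Cauchy--Schwarz in $x$ yields
\begin{equation*}
\int_{\Om_\G}|\na\T|^2\,dxdy \;\ge\; \int_{-1}^1\frac{|\T_{\mathrm{top}}(x)|^2}{y_{\mathrm{top}}(x)}\,dx \;\ge\; \frac{1}{\e}\Bigl(\int_{-1}^1|\T_{\mathrm{top}}(x)|\,dx\Bigr)^2.
\end{equation*}
When $\Om_\G$ is a graph, $\T_{\mathrm{top}}(x)=\arctan f'(x)$ and the change of variables $dx=\cos\T\,dl$ gives $\int|\T_{\mathrm{top}}|\,dx=\int_\G|\T|\cos\T\,dl$, which is $\ge c\e$ once the competitor bound $l(\G)\le 2+C\e$ is used to keep $\cos\T$ bounded below on a set of comparable measure. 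Feeding this into the display yields the desired $c\e$ lower bound.

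The main obstacle is the lower bound when $\Om_\G$ is not a graph: vertical cross-sections then split into several intervals, and the identification of $\T_{\mathrm{top}}(x)$ with a tangent angle of $\G$ becomes ambiguous where $\G$ folds back over itself. I would handle this by applying the one-dimensional Poincar\'e inequality to the \emph{lowest} component of each slice (whose upper endpoint lies on a Lipschitz subarc of $\G$) and invoking the chord-arc property of Proposition~\ref{chordarc} -- available for minimizers because the competitor pins their total energy -- to bound the measure of pathological $x$-values. Making this quantitative with constants $c,C$ independent of $\e$ is the only non-trivial step; the upper bound is then comparatively routine.
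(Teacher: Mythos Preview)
Your reading of the lemma---that the upper bound can only be meant for the infimum over $\mathcal G_\e^1$ (equivalently, for minimizers)---is correct and matches what the paper actually proves: the upper bound there is obtained by exhibiting a single competitor, essentially your $\G_\e$. So on the upper bound the two arguments coincide.

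For the lower bound the paper takes a different and more robust route: it rescales vertically by $1/\e$, setting $\tilde\G=\{(x,y):(x,\e y)\in\G\}$ (so $|\Om_{\tilde\G}|=1$) and $\tilde\T(x,y)=\arctan\!\big(\e^{-1}\tan\T(x,\e y)\big)$, checks that $\tilde\T$ satisfies the tangential boundary condition on $\partial\Om_{\tilde\G}$, and uses the elementary pointwise bound $\cos^2\T+\e^{-2}\sin^2\T\ge1$ (for $\e<1$) to get
\[
\int_{\Om_{\tilde\G}}|\partial_y\tilde\T|^2\,dxdy\;\le\;\frac{1}{\e}\int_{\Om_\G}|\partial_y\T|^2\,dxdy.
\]
A compactness/contradiction argument in the spirit of Section~3 then gives a universal $c>0$ with $\int_{\Om_{\tilde\G}}|\partial_y\tilde\T|^2\ge c$. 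This yields the lower bound for \emph{every} $\G\in\mathcal G_\e^1$, with no graph hypothesis and no appeal to minimality.

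Your slice--Poincar\'e approach, by contrast, has a genuine gap already in the graph case, beyond the non-graph issue you flag. From $l(\G)\le 2+C\e$ you get only $\int_\G(1-\cos\T)\,dl\le C\e$, i.e.\ $\cos\T$ is close to $1$ off an exceptional set of $dl$-measure $O(\e)$. But your total resource $\int_\G|\T|\,dl$ is itself only $\ge\e$, so nothing prevents all of that mass from sitting on the exceptional set (think of a near-rectangular profile with slope of order $1$ on an interval of length $\ll\e$ and slope $0$ elsewhere); then $\int_\G|\T|\cos\T\,dl$ can be $o(\e)$ and the displayed inequality $\int|\nabla\T|^2\ge\e^{-1}\bigl(\int|\T_{\mathrm{top}}|\,dx\bigr)^2$ no longer delivers $c\e$. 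In the non-graph case the Cauchy--Schwarz step fails as well, since then $\int_{-1}^1 y_{\mathrm{top}}(x)\,dx$ can exceed $\e$. The rescaling trick in the paper bypasses both difficulties and decouples the lower bound from the upper-bound constant.
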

\begin{proof}
Given $\G\in\mathcal{G}_{\e}^1$, $\T$ is the corresponding angle function.
We first estimate the lower bound of energy. We set
\beqo
\tilde{\G}=\{(x,y): (x,\e y)\in\G\},\quad \tilde{\T}(x,y)=\arctan \left(\f{\tan{\T(x,\e y)}}{\e}\right) \text{ for }(x,y)\in\Om_{\tilde{\G}}.
\eeqo
Then we can check that $|\Om_{\tilde{\G}}|=1$ and $\T$ satisfies the boundary condition \eqref{bdycondition} corresponding to $\tilde{\G}$. Thus there exists a constant $c$ such that
\beq\label{smallvol1}
\int_{\Om_{\tilde{\G}}}|\pa_y\tilde{\T}|^2\,dxdy\geq c
\eeq
Otherwise one can use the similar argument in Section 3 to get a contradiction. On the other hand, by definition of $\tilde{\G}$ and $\tilde{\T}$ we have
\begin{align}\label{smallvol2}
\int_{\Om_{\tilde{\G}}}|\pa_y\tilde{\T}|^2\,dxdy&= \int_{\Om_{\G}}\f{|\pa_y\T|^2}{\e\cdot \left(1+\left|\f{\tan{\T(x,y)}}{\e}\right|^2\right)^2\cdot |\cos\T(x,y)|^4}\,dxdy\\
\nonumber &\leq \f{1}{\e}\int_{\Om_\G} |\pa_y\T|^2\,dxdy.
\end{align}
Therefore
\beqo
\int_{\Om_\G}|\na \T|^2\,dxdy\geq c\e,
\eeqo
by \eqref{smallvol1} and \eqref{smallvol2}.  Meanwhile, we can construct a $\G\in\mathcal{G}_{\e}^1$ such that $\int_{\Om_\G} |\na \T|^2 \,dxdy =\leq C\e$ for some larger constant $C$. Set
\begin{align*}
\G=&\{(x, \f{\e}{2}(\cos x+1)):x\in[-1,1]\}\\
\T(x,y)=&\f{\arctan{f'(x)}}{f(x)}y,\quad\text{for }x\in(-1,1),\,y\in[0,f(x)].
\end{align*}
We can directly verify that $\G\in \mathcal{G}_\e^1$ and $\int_{\Om_\G} |\na \T|^2 \,dxdy \leq C\e$ for some constant $C$ independent of $\e$. This proves Lemma \ref{smallenergy}.
\end{proof}

\begin{rmk}
Now we consider the minimization problem \eqref{minimization1} with $v=\e^2\ll 1$. We can determine the appropriate order of $\tilde{a}$. Assume $\tilde{a}\sim O(\e^{-\al})$ for some $\al\in\mathbb{R}$. Then the second term (surface energy term) is of order $\e^{1-\al}$. For the Dirichlet energy term since $\G\in\mathcal{G}_{\e^{2\al}}^1$, by Lemma \ref{smallenergy} we know it is of order $\e^{2\al}$. Matching these two terms gives $\al=\f13$. According to the deduction of \eqref{minimization1} we know that if we don't fix two endpoints of $\G$, then the energy-minimizing droplet with volume $\e^2$ will be a elongated drop with length of the order $\e^{\f23}$ and the total energy is of order $\e^{\f23}$.
\end{rmk}

Next we study the asymptotic shape of the rescaled droplet. For such purpose, we add some extra regularity assumption on the curve $\G$. Consider a subset of $\mathcal{G}^1$, denoted by $\tgg$, which consists of all the curves in $\mathcal{G}^1$ that are graphs of $H^{2}_0$ functions,
\beqo
\tgg:=\{\G\in\mathcal{G}^1,\; \G=\{(x,f(x))\},
\eeqo
where $f$ satisfies
\beq\label{conditionf}
f\in H_0^{2}([-1,1]),\; f'(\pm1)=0,\;f(x)>0 \text{ on }(-1,1).
\eeq
Given $\e\ll 1$, we define a transformation operator $\mathcal{T}_\e$, which compresses $\G\in\tgg$ in the vertical direction:
\beqo
\mathcal{T}_{\e}(\G)=\{(x,\e^{\f23}f(x)):\;x\in[-1,1]\}, \quad \G=\{(x,f(x))\}.
\eeqo
Now after taking $v=\e^2$ in \eqref{minimization2} and multiplying $\e^{-\f23}$, we obtain the functional
\beq
\begin{aligned}\label{Eepsilon}
E_\e(f)&=E_\e(\G)\\
&=\e^{-\f23}\int_{\Om_{\mathcal{T}_{\e}(\G)}}|\na\T_{\mathcal{T}_{\e}(\G)}|^2\,dxdy+\e^{\f13}\f{l(\mathcal{T}_{\e}(\G))}{\sqrt{\Om_{\mathcal{T}_{\e}(\G)}}}\\
&=\e^{-\f23}\int_{\Om_{\mte{\G}}}|\na\T_{\mte(\G)}|^2\,dxdy+\f{l(\mte(\G))}{\sqrt{\Om_{\G}}}\\
&=\e^{-\f23}\int_{-1}^1 \int_{0}^{\e^{\f23}f(x)}\{|\pa_x \T_{\mte(\G)}|^2+|\pa_y \T_{\mte(\G)}|^2\}\,dydx+\f{\int_{-1}^1 \sqrt{1+\e^{\f43}|f'(x)|^2}\,dx}{\sqrt{\int_{-1}^1 f(x)\,dx}}.
\end{aligned}
\eeq
Then for a sequence of positive numbers $\e\ri 0$, we consider the sequence of functionals on $H_0^{2}([-1,1])$
\beq
E_\e(f):=\begin{cases}
E_\e(f)\text{ defined in \eqref{Eepsilon}}, &\text{if } \G=\{x,f(x)\}\in\mathcal{G}^1,\\
+\infty &\text{otherwise}.
\end{cases}
\eeq
And we also define the candidate functional $E_0(f)$ for $\G$-convergence,
\beqo
E_0(f):=\int_{-1}^1 \f{|f'{x}|^2}{f(x)}\,dx+\f{2}{\sqrt{\int_{-1}^1 f(x)\,dx}},\quad f\in H_0^{2}([-1,1]).
\eeqo
We have the following result:
\begin{proposition}\label{gammacvg}
As $\e\ri 0$, the sequence $\{E_\e\}$ $\G$-converges to $E_0$ in the $H^2$ topology.
\end{proposition}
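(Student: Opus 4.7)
The plan is to establish the two standard halves of $\Gamma$-convergence after first recasting the Dirichlet part of $E_\e$ on a fixed-height domain. Substituting $y=\e^{2/3}\eta$ and $\tilde\Theta(x,\eta):=\Theta_{\mte(\G)}(x,\e^{2/3}\eta)$ rewrites the Dirichlet integral as
\begin{equation*}
\e^{-2/3}\!\int_{-1}^{1}\!\!\int_{0}^{\e^{2/3}f(x)}\!|\nabla\Theta|^{2}\,dydx \;=\; \int_{\Omega_{f}}\!\bigl(|\partial_{x}\tilde\Theta|^{2} + \e^{-4/3}|\partial_{\eta}\tilde\Theta|^{2}\bigr)\,d\eta dx
\end{equation*}
on the fixed domain $\Omega_{f}=\{-1<x<1,\,0<\eta<f(x)\}$, with boundary data $\tilde\Theta(x,0)=0$ and $\tilde\Theta(x,f(x))=\arctan(\e^{2/3}f'(x))$. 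The large weight $\e^{-4/3}$ in front of $|\partial_{\eta}\tilde\Theta|^{2}$ forces the minimizer to behave on each vertical slice like the linear interpolant between the two boundary values, and this linear profile contributes exactly $\int_{-1}^{1}|f'|^{2}/f\,dx$ in the limit. The perimeter-to-volume ratio converges directly: $\sqrt{1+\e^{4/3}|f_\e'|^{2}}\to 1$ and $\int f_{\e}\to\int f>0$ follow from the uniform convergence $f_{\e}\to f$, $f_{\e}'\to f'$ given by $H^{2}\hookrightarrow C^{1}$.

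For the $\Gamma$-limsup I would take the constant recovery sequence $f_{\e}\equiv f$ (the case $E_{0}(f)=\infty$ is vacuous). The minimality of the harmonic extension $\Theta_{\mte(\G)}$ bounds the Dirichlet part of $E_{\e}(f)$ above by the Dirichlet energy of the explicit Ansatz
\begin{equation*}
\Theta_{\e}^{\mathrm{ans}}(x,y) \;=\; \frac{\arctan(\e^{2/3}f'(x))}{\e^{2/3}f(x)}\,y,
\end{equation*}
which linearly interpolates between the two boundary values. Its $\partial_{y}$ part reduces to $\int|\arctan(\e^{2/3}f'(x))|^{2}/(\e^{4/3}f(x))\,dx$, dominated pointwise by $|f'|^{2}/f$ (using $|\arctan z|\leq|z|$) and converging pointwise to it; dominated convergence produces the correct limit $\int|f'|^{2}/f$. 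The $\partial_{x}$ part carries a factor of $y^{2}\leq\e^{4/3}f(x)^{2}$ from differentiating in $x$, so after multiplication by $\e^{-2/3}$ it vanishes at rate $\e^{4/3}$.

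For the $\Gamma$-liminf, given any $f_{\e}\to f$ in $H^{2}$ with $\liminf E_{\e}(f_{\e})<\infty$, the key input is a slicewise one-dimensional Dirichlet inequality: since the harmonic $\Theta_\e$ has boundary values $0$ and $\arctan(\e^{2/3}f_{\e}'(x))$,
\begin{equation*}
\int_{0}^{\e^{2/3}f_{\e}(x)}|\partial_{y}\Theta_{\e}|^{2}\,dy \;\geq\; \frac{|\arctan(\e^{2/3}f_{\e}'(x))|^{2}}{\e^{2/3}f_{\e}(x)}\qquad \text{for a.e. }x\in(-1,1).
\end{equation*}
Multiplying by $\e^{-2/3}$, dropping the nonnegative $|\partial_{x}\Theta_{\e}|^{2}$ term, and applying Fatou's lemma together with the pointwise limit of the integrand (valid because $f_{\e}\to f$ and $f_{\e}'\to f'$ uniformly on $(-1,1)$) yields the bound $\liminf\geq\int|f'|^{2}/f$; combined with the convergence of the perimeter-volume ratio this completes the $\Gamma$-liminf inequality.

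The main obstacle I anticipate is justifying the slicewise Dirichlet inequality, which requires that for a.e. $x$ the function $\Theta_\e(x,\cdot)$ be absolutely continuous with the claimed endpoint values; this follows from interior regularity of the harmonic extension together with continuity of its boundary trace, and the measure-zero endpoint set $\{x=\pm 1\}$ where $f$ vanishes plays no role in either the Fatou or dominated-convergence steps. A minor secondary point is that each $\mte(\G_\e)$ is a Lipschitz domain with continuous boundary datum, so the harmonic extension exists with finite Dirichlet energy throughout the argument.
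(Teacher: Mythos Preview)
Your proposal is correct and follows essentially the same route as the paper: both drop the $|\partial_x\Theta|^2$ term and invoke the slicewise one-dimensional Dirichlet inequality combined with Fatou's lemma for the $\Gamma$-liminf, and both use the constant recovery sequence $f_\e\equiv f$ with the linear ansatz $\Theta_\e(x,y)=\arctan(\e^{2/3}f'(x))\,y/(\e^{2/3}f(x))$ for the $\Gamma$-limsup, showing the $\partial_x$ contribution is $O(\e^{4/3})$. Your opening change of variables $y=\e^{2/3}\eta$ is only expository and not used later; the paper omits it and works directly, but the substance of the argument is identical.
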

\begin{proof}
First we prove the lower semi--continuity condition, i.e. for any $g\in C_0^1[-1,1]$ and for any sequence $\{g_\e\}$ in $C_0^1[-1,1]$,
\beq\label{lsc}
g_\e\ri g \text{ in }H^2[-1,1] \text{ implies }\liminf\limits_{\e\ri 0}E_\e(g_\e)\geq E_0(g).
\eeq
The case $\liminf\limits_{\e\ri 0}E_\e(g_\e)=+\infty$ is trivial. We therefore assume that $\liminf\limits_{\e\ri 0}E_\e(g_\e)=C<+\infty$. And by the $C^1$ convergence of $g_\e$, we also suppose that $|g_\e'(x)|\leq c$ for some constant $c$ holds for any $\e>0$ and $x\in[-1,1]$. Now we examine the first term of $E_\e(g_\e)$ more closely
\begin{align*}
&\e^{-\f23}\int_{-1}^1\int_0^{\e^{\f23}g_\e(x)}\{|\pa_x\T_{\mte(\G)}|^2+|\pa_y\T_{\mte(\G)}|^2\}\,dydx\\
>&\e^{-\f23}\int_{-1}^1\int_0^{\e^{\f23}g_\e(x)}\{|\pa_y\T_{\mte(\G)}|^2\}\,dydx\\
\geq& \e^{-\f23}\int_{-1}^1\left\{\f{|\T_{\mte(\G)}(x,\e^{\f23}g_\e(x))|^2}{\e^{\f23}g_\e(x)}\right\}\,dx\\
=&\e^{-\f23}\int_{-1}^1\left\{\f{|\arctan{(\e^{\f23}g_\e'(x))}|^2}{\e^{\f23}g_\e(x)}\right\}\,dx
\end{align*}
Since $|g_\e'(x)|\leq c$, we have that for any $\sigma>0$, there exists $\e_\s>0$ such that for any $\e<\e_\s$, $|\arctan{(\e^{\f23}g_\e'(x))}|\geq (1-\s)|\e^{\f23}g_\e'(x)|$. And therefore we have
\begin{align*}
&\e^{-\f23}\int_{-1}^1\left\{\f{|\arctan{(\e^{\f23}g_\e'(x))}|^2}{\e^{\f23}g_\e(x)}\right\}\,dx\\ \geq &\e^{-\f23}(1-\s)^2\int_{-1}^1\left| \f{\e^{\f43}|g_\e'(x)|^2}{\e^{\f23}g_\e(x)} \right|\,dx=(1-\s)^2\int_{-1}^1 \f{|g_\e'(x)|^2}{g_\e(x)}\,dx,\quad \text{ when }\e<\e_\s
\end{align*}
We obtain
\begin{align*}
&\liminf\limits_{\e\ri 0}E_\e(g_\e)\\
=&\liminf\limits_{\e\ri 0}\left\{\e^{-\f23}\int_{-1}^1\int_0^{\e^{\f23}g_\e(x)}\{|\pa_x\T_{\mte(\G)}|^2+|\pa_y\T_{\mte(\G)}|^2\}\,dydx+\f{\int_{-1}^1 \sqrt{1+\e^{\f43}|g_\e'(x)|^2}\,dx}{\sqrt{\int_{-1}^1 g_\e(x)\,dx}}\right\}\\
\geq& \liminf\limits_{\e\ri 0}\left\{\e^{\f23}\int_{-1}^1 \f{|g_\e'(x)|^2}{g_\e(x)}\,dx+\f{2}{\sqrt{\int_{-1}^1 g_\e(x)\,dx}}\right\}\geq E_0(g)
\end{align*}
Here in the last step we used the $C^1$ convergence of $g_\e$ and Fatou's lemma. This gives the proof of the lower semi-continuity \eqref{lsc}.

The second part of proving Gamma-convergence is to find a recovery sequence for each $f$ satisfying \eqref{conditionf}. We can simply take $f_\e=f$ for any $\e>0$. By the same argument in the proof of lower semi-continuity, we have
\beqo
\lim\limits_{\e\ri 0} E_\e(f)\geq E_0(f)
\eeqo
On the other hand, take $\T_\e(x,y)=\f{y}{\e^{\f23}f(x)}\arctan{(\e^{\f23}f'(x))}$ for $(x,y)$ satisfying $-1\leq x\leq 1,\; 0\leq y\leq e^{\f23}f(x)$. It holds that
\begin{align*}
&\e^{-\f23}\int_{-1}^1\int_0^{\e^{\f23}f(x)}|\pa_x\T_{\e}|^2\,dydx\\
=&\int_{-1}^1 \f{\e^{\f43}}{3}f(x)^3\left|\f{f''}{f(1+\e^{\f43}|f'|^2)}-\f{f'\arctan{(\e^{\f23}f')}}{\e^{\f23}f^2} \right|^2\,dx\\
\sim &O(\e^{\f43}).
\end{align*}
\begin{align*}
&\e^{-\f23}\int_{-1}^1\int_0^{\e^{\f23}f(x)}|\pa_y\T_{\e}|^2\,dydx\\
=&\int_{-1}^1 \f{|\arctan{(\e^{\f23}f')}|^2}{\e^{\f43}f}\,dx\sim  O(1).
\end{align*}
After comparing the above two identities, we conclude that
\begin{align*}
&\e^{-\f23}\int_{-1}^1\int_0^{\e^{\f23}f(x)}|\na \T_\e|^2\,dydx+\f{\int_{-1}^1 \sqrt{1+\e^{\f43}|f'(x)|^2}\,dx}{\sqrt{\int_{-1}^1 f(x)\,dx}}\\
=&(1+o(1))\int_{-1}^1 \f{|f'|^2}{f}\,dx+\f{2}{\sqrt{\int_{-1}^1 f(x)\,dx}}+o(1)=E_0(f)+o(1)
\end{align*}
Therefore we obtain $\lim\limits_{\e\ri 0}E_\e(f)=E_0(f)$ for any $f\in H_0^{2}([-1,1])$. The proof is complete.

\end{proof}

Proposition \ref{gammacvg} inspires us to study the following minimization problem
\beq\label{minimization4}
\min\limits_{g\in H_0^{2}([-1,1])} \bigg\{ \f{2}{\sqrt{\int_{-1}^1 g(x)\,dx}}+ \int_{-1}^1 \f{|g'(x)|^2}{g(x)}\,dx \bigg\}
\eeq
Let $g=h^2$, the problem becomes
\beq\label{minimization3}
\min\limits_{h^2\in H^2_0[-1,1]} \bigg\{ \f{2}{\sqrt{\int_{-1}^1 h(x)^2\,dx}}+ 4\int_{-1}^1 |h'(x)|^2\,dx \bigg\}
\eeq
The Euler Lagrange equation is
\beqo
h''(x)=-\f{h(x)}{4(\int_{-1}^1 h^2 \,dx)^{\f32}}, \quad h\in H^2_0[-1,1].
\eeqo
This ODE can be solved explicitly,
\beqo
h(x)=\pi^{-\f23}\cos{\f{\pi}{2}x}
\eeqo
and therefore
\beqo
g(x)=\pi^{-\f43}\big(\f{1+\cos{\pi x}}{2}\big)
\eeqo
is the minimizer for the minimization problem \eqref{minimization4}. Using the above $\G$-convergence result, we conclude that when the volume $v=\e^2<<1$, the approximated profile of $\G$ is $\Large\{\left(x,\e^{\f23}\pi^{-\f43}(\f{1+\cos{\pi x}}{2})\right):x\in[-C,C]\Large\}$, where $C\sim O(\e^{\f23})$ is a coefficient that ensures the volume constraint.

\end{document}